\newtheorem{theorem}{Theorem}[section]
\newtheorem{lemma}[theorem]{Lemma}
\newtheorem{corollary}[theorem]{Corollary}
\newtheorem{proposition}[theorem]{Proposition}
\newtheorem{probleme}[theorem]{Problem}
\newtheorem{convention}[theorem]{Convention}
\newtheorem{remarque}[theorem]{Remark}
\newcommand{\GGBS}{$vGBS$\xspace}
\newcommand{\vGBS}{$vGBS$\xspace}
\newcommand{\GBS}{$GBS$\xspace}
\newcommand{\GBSn}[1]{$GBS_{#1}$\xspace}
\title{Multiple conjugacy problem in graphs of  free abelian groups}
\author{Benjamin Beeker}
\renewcommand{\AA}{a}
\newcommand{\BB}{b}
\newcommand{\Z}{\mathbb Z }
\newcommand{\Q}{\mathbb Q}
\newcommand{\N}{\mathbb N }
\begin{document}
\maketitle

\begin{abstract}
A group $G$ is a \vGBS group if it admits a decomposition as a finite graph of groups with all edge and vertex groups finitely generated and free abelian. We prove that the multiple conjugacy problem is solvable between two $n$-tuples $A$ and $B$ of elements of $G$ whenever the elements of $A$ does not generate an elliptic subgroup.
\end{abstract}

\section{Introduction}

Let $\Gamma$ be a finite graph of groups with all vertex (and edge) groups free abelian of finite rank. Let $G$ be the fundamental group of $\Gamma$. We call such a group a Generalized Baumslag-Solitar group of variable rank, or \GGBS group. The graph $\Gamma$ is a \GGBS decomposition of $G$. When a \GGBS decomposition of a \GGBS group $G$ has all vertex and edge groups of a fixed rank $n$, we say that $G$ is a \GBSn{n} group or \GBS group if $n=1$.

The paper focuses on the following problem:

\begin{probleme}[Multiple conjugacy problem]
Let $G$ be a \GGBS group. Let $A=(\AA_1,\dots,\AA_n)$ and $B=(\BB_1,\dots,\BB_n)$ be two $n$-tuples of elements of $G$. Is there an element of $G$ which conjugates $A$ to $B$?
\end{probleme}


Given $\Gamma$ a graph of groups, with fundamental group $G$, an element $g\in G$ is \textit{elliptic} if it is conjugate into a vertex group, otherwise $g$ is \textit{hyperbolic}. In \cite{BoMaVen}, O. Bogopolski, A. Martino and E. Ventura show that the conjugacy problem is not decidable in \GBSn{n} groups with $n\geq 4$. The non-decidability is resulting from the fact that it is impossible to decide whether two elements of $\Z^4$ belong to the  same orbit of $\Z^4$ under the action of a finitely generated subgroup $\langle \varphi_1,\dots,\varphi_p\rangle$ of $Gl_4(\mathbb Z)$. Now take the \GBSn{4} group $F=\Z^4\rtimes_{(\varphi_1,\dots \varphi_p)}\mathbb F_p$,  two elements $a$ and $b$ of $\Z^4$ are conjugate  in $F$ if and only if there exists an element $\varphi$ in $\langle \varphi_1,\dots \varphi_p\rangle\subset Gl_4(\mathbb Z)$ such that $\varphi(a)=b$. Thus the conjugacy problem is unsolvable for elliptic elements in \GBSn{4} groups. 

However, this kind of counter-example is the only obstruction to the conjugacy problem, it is even the only obstruction to the multiple conjugacy problem in \vGBS groups. 
Actually, we have the following result.

\begin{theorem}\label{thintro}
Let $G$ be a \GGBS group with \GGBS decomposition $\Gamma$. Let $A=(a_1,\dots,a_n)$ and $B=(b_1,\dots,b_n)$ be two $n$-tuples of elements of $G$. Assume that the group $\langle a_1,\dots,a_n \rangle$ is not elliptic in $\Gamma$. Then we can decide whether $A$ and $B$ are conjugate in $G$ or not. If $A$ and $B$ are conjugate we explicit a conjugating element.
\end{theorem}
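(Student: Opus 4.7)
The strategy is to reduce the multiple conjugacy problem to the single conjugacy problem for a hyperbolic element, together with a computation inside its centralizer. By Serre's theorem, a finitely generated group acting on a tree is elliptic if and only if every pair of its elements is; hence the hypothesis that $\langle A\rangle$ is not elliptic in the Bass--Serre tree $T$ of $\Gamma$ guarantees the existence of a hyperbolic element $h$ expressible as a very short word in the $a_i$ (of the form $a_i$ or $a_i a_j$).

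First I would algorithmically single out such a word $w$ with $h:=w(A)$ hyperbolic. This rests on the fact that ellipticity is decidable in a \vGBS group: one puts the element in Bass--Serre normal form along $T$ and checks whether its cyclically reduced loop has length zero. Testing each $a_i$ and each product $a_i a_j$, the Serre criterion guarantees that at least one is hyperbolic.

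Now fix such an $h=w(A)$ and set $h':=w(B)$. Any conjugator $g$ sending $A$ to $B$ satisfies $ghg^{-1}=h'$, so I would invoke the solution of the single-element conjugacy problem for hyperbolic elements in \vGBS groups: either conclude that $h$ and $h'$ are not conjugate (in which case $A$ and $B$ are not either), or produce an explicit $g_0$ with $g_0 h g_0^{-1}=h'$. Every other such conjugator has the form $cg_0$ with $c\in C_G(h)$, and the problem reduces to deciding whether some $c\in C_G(h)$ satisfies $c\, a_i'\, c^{-1}=b_i$ for every $i$, where $a_i':=g_0 a_i g_0^{-1}$.

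The main obstacle lies in this last step. In a \vGBS group, $C_G(h)$ fits into a short exact sequence whose kernel pointwise fixes the axis $L$ of $h$ and is a finitely generated free abelian group (essentially the intersection of the edge groups met along $L$), and whose quotient is an infinite cyclic group of translations of $L$ containing $\langle h\rangle$ as a finite-index subgroup. Parameterizing $c$ through this semidirect structure, and expanding each equation $c a_i' c^{-1}=b_i$ in terms of normal forms along $L$, should convert the problem into a finite system of linear Diophantine equations in the exponents of $c$, whose solvability is classical. The delicate point is that the $a_i'$ need not themselves stabilize $L$: they may meet $L$ in a single vertex, in a segment, or not at all. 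One must first use these geometric configurations to reduce each equation $c a_i' c^{-1}=b_i$ to a constraint in the appropriate vertex or edge group, and only then assemble these constraints into the desired linear system; it is here that the free abelian structure of the vertex and edge groups of $\Gamma$ becomes essential, and where I expect most of the technical work to concentrate.
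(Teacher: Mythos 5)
Your reduction is exactly the paper's: use Serre's lemma to find a hyperbolic element $h$ among the $a_i$ and the products $a_ia_j$, solve the single conjugacy problem for the hyperbolic elements $h$ and $w(B)$ (which is indeed solvable here, via the local conjugacy problem in vertex groups, proposition \ref{conj}), and then search for the remaining conjugator inside the centralizer $C_G(h)\simeq\Z^p\rtimes\Z$. Up to that point the argument is sound (modulo the small slip that your correction factor $c$ lives in $C_G(h')$ rather than $C_G(h)$; equivalently write $g=g_0c$ with $c\in C_G(h)$). The gap is in the final step, which you defer as ``where I expect most of the technical work to concentrate'': that step is the bulk of the paper, and your proposed resolution does not go through as stated.

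Two concrete problems. First, to pin down the $\Z$-coordinate of $c$ you must compare the position of $\mathcal A_{a_i'}\cap L$ with that of $\mathcal A_{b_i}\cap L$, but deciding whether the fixed-point set of an elliptic element meets the axis $L$ in a finite segment, a half-line, or all of $L$ is not a routine normal-form computation: an elliptic $g$ may fail to fix a half-line while a proper power $g^p$ does, and detecting this requires the ``modulus'' automorphism obtained by transporting vertex groups along $L$ by conjugation by $h$. The criterion (proposition \ref{geom}, made effective by corollary \ref{finitude} and proposition \ref{positions}) is that $g$ have a power in a $\varphi_h$-invariant subspace on which the minimal polynomial of $\varphi_h$ has integer coefficients; nothing in ``expanding in normal forms along $L$'' produces this. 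Second, your plan breaks down in the degenerate case where every $a_i'$ and every $b_i$ fixes all of $L$: then the geometry imposes no constraint on the translation part $m$ of $c$, the unknown $m$ ranges over all of $\Z$, and there is no finite linear system to solve. The paper handles this by observing that the group generated by the $a_i'$, the $b_i$ and $C_G(h)$ is polycyclic and invoking the solvability of multiple conjugacy in polycyclic groups, an external input your argument is missing. Finally, even in the generic case, solving $\alpha a_i'\alpha^{-1}=b_i$ for $\alpha$ in the elliptic kernel $E$ is not literally a single linear Diophantine system: the solution set is a finite union of affine sublattices of $E$, computed by a nontrivial recursion on loop forms (lemma \ref{lemmetechnique}), though this last point is at least close in spirit to what you propose.
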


Let us give a sketch of the proof. The conjugacy problem between hyperbolic elements is solvable in groups much more general than \vGBS groups. We give in section \ref{sectionconjhyp} some conditions under which this problem is decidable.  

Now consider two $n$-tuples $A$ and $B$ of elements of $G$. If the subgroup $\langle A\rangle$ generated by the elements of $A$ is not elliptic then we may find a hyperbolic element $a\in \langle A\rangle$. We may construct $b\in\langle B\rangle$ such that if $A$ and $B$ are conjugate by an element $g$, then $gag^{-1}=b$.

If $b$ is not hyperbolic, then $A$ and $B$ cannot be conjugate. Otherwise we may decide whether $a$ and $b$ are conjugate or not. If $a$ and $b$ are not conjugate then again $A$ and $B$ are not conjugate. If there exists $g$ such that $gag^{-1}=b$, by conjugating $B$ by $g^{-1}$, we reduce theorem \ref{thintro} to solving the following problem. 

\begin{probleme}
Given two $n$-tuples $A$ and $B$, and a hyperbolic element $a$, does there exist an element $g$ of $C_G(a)$, the centralizer of $a$ in $G$, which conjugates $A$ to $B$?
\end{probleme}

For simplicity, we describe the case $n=1$, with $A=\{a_1\}$ and $B=\{b_1\}$. The case $n>1$ follows from the same techniques.

 We denote by $\mathcal T$ the Bass-Serre tree of $\Gamma$. If an element $h$ is elliptic, its \textit{characteristic space} is the set of its fixed points in $\mathcal T$. If an element $h$  is hyperbolic, its \textit{characteristic space}  is the unique line of $\mathcal T$ on which it acts by a translation, also called the \textit{axis} of $h$. The characteristic space of an element $h$ is denoted $\mathcal A_h$. The centralizer $C_G(h)$ of a hyperbolic element $h$ fixes  the axis of $h$ setwise, acting on it by a translation. To be more precise, we can decompose $C_G(h)$ as a semi-direct product $\Z^p\rtimes \Z$ such that the $\Z^p$ part fixes the axis pointwise while the $\Z$ part acts on it by a translation.

If there exists an element $g$ such that $ga_1g^{-1}=b_1$, then $g\cdot\mathcal A_{a_1}=\mathcal A_{b_1}$. This gives strong informations on $g$. In particular, if we assume in addition that $g$ belongs to the centralizer of a hyperbolic element $a$, we may obtain the translation length of $g$ on $\mathcal A_a$ by looking at the relative position of $\mathcal A_{a_1}$ and $\mathcal A_{b_1}$ with respect to $\mathcal A_a$ (see figure \ref{positionrelative} for an example).

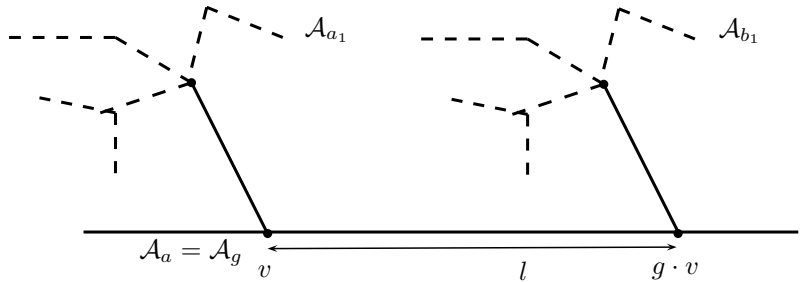
\begin{figure}

\begin{center}
\scalebox{1} 
{
\begin{pspicture}(0,-1.8789062)(11.854688,1.8589063)
\psline[linewidth=0.04cm](0.98,-1.1410937)(10.38,-1.1410937)
\usefont{T1}{ptm}{m}{n}
\rput(2.3942182,-1.4160937){$\mathcal A_a=\mathcal A_g$}
\psdots[dotsize=0.12](3.4,-1.1610937)
\psdots[dotsize=0.12](8.8,-1.1610937)
\psdots[dotsize=0.12](2.4,0.8389062)
\psline[linewidth=0.04cm](2.4,0.8389062)(3.4,-1.1610937)
\psline[linewidth=0.04cm,linestyle=dashed,dash=0.16cm 0.16cm](2.4,0.8389062)(1.2,0.43890625)
\psline[linewidth=0.04cm,linestyle=dashed,dash=0.16cm 0.16cm](2.4,1.0389062)(2.6,1.8389063)
\psline[linewidth=0.04cm,linestyle=dashed,dash=0.16cm 0.16cm](1.4,0.43890625)(1.4,-0.36109376)
\psline[linewidth=0.04cm,linestyle=dashed,dash=0.16cm 0.16cm](2.4,0.8389062)(1.4,1.4389062)
\psline[linewidth=0.04cm,linestyle=dashed,dash=0.16cm 0.16cm](1.4,0.43890625)(0.4,0.63890624)
\psline[linewidth=0.04cm,linestyle=dashed,dash=0.16cm 0.16cm](2.6,1.8389063)(3.6,1.4389062)
\psline[linewidth=0.04cm,linestyle=dashed,dash=0.16cm 0.16cm](1.4,1.4389062)(0.0,1.4389062)
\usefont{T1}{ptm}{m}{n}
\rput(4.184219,1.5439062){$\mathcal A_{a_1}$}
\psline[linewidth=0.02cm,arrowsize=0.05291667cm 2.0,arrowlength=1.4,arrowinset=0.4]{<->}(3.4,-1.3610938)(8.78,-1.3410938)
\usefont{T1}{ptm}{m}{n}
\rput(6.7642183,-1.6560937){$l$}
\usefont{T1}{ptm}{m}{n}
\rput(3.3628125,-1.6560937){$v$}
\usefont{T1}{ptm}{m}{n}
\rput(8.762813,-1.6560937){$g\cdot v$}
\psdots[dotsize=0.12](7.82,0.81890625)
\psline[linewidth=0.04cm](7.82,0.81890625)(8.82,-1.1810937)
\psline[linewidth=0.04cm,linestyle=dashed,dash=0.16cm 0.16cm](7.82,0.81890625)(6.62,0.41890624)
\psline[linewidth=0.04cm,linestyle=dashed,dash=0.16cm 0.16cm](7.82,1.0189062)(8.02,1.8189063)
\psline[linewidth=0.04cm,linestyle=dashed,dash=0.16cm 0.16cm](6.82,0.41890624)(6.82,-0.38109374)
\psline[linewidth=0.04cm,linestyle=dashed,dash=0.16cm 0.16cm](7.82,0.81890625)(6.82,1.4189062)
\psline[linewidth=0.04cm,linestyle=dashed,dash=0.16cm 0.16cm](6.82,0.41890624)(5.82,0.61890626)
\psline[linewidth=0.04cm,linestyle=dashed,dash=0.16cm 0.16cm](8.02,1.8189063)(9.02,1.4189062)
\psline[linewidth=0.04cm,linestyle=dashed,dash=0.16cm 0.16cm](6.82,1.4189062)(5.42,1.4189062)
\usefont{T1}{ptm}{m}{n}
\rput(9.614219,1.5239062){$\mathcal A_{b_1}$}
\end{pspicture} 
}

\end{center}
\caption{An element $g$ which acts on $\mathcal A_a$ such that $g\cdot \mathcal A_{a_1}=\mathcal A_{b_1}$  has translation length equal to $l$.}
\label{positionrelative}
\end{figure} 

We may determine this position:

\begin{theorem}
 
 Let $G$ be a \GGBS group with \GGBS decomposition $\Gamma$, let $a$ and $a_1$ be two elements of $G$, with  $a$ hyperbolic in $\Gamma$.
Call $\mathcal A_{a}$ and $\mathcal A_{a_1}$ their characteristic spaces. 

There exists an algorithm which decides 
\begin{itemize}
 \item whether ${\mathcal A}_a \cap {\mathcal A}_{a_1}= \emptyset$, and if so gives the shortest path between ${\mathcal A}_a$ and ${\mathcal A}_{a_1}$,
 \item or ${\mathcal A}_a \cap {\mathcal A}_{a_1}$ is non empty and of finite length, and if so gives its endpoints,
 \item or ${\mathcal A}_a \cap {\mathcal A}_{a_1}$ is a half-line, and if so gives the origin and the direction of the half-line,
 \item or ${\mathcal A}_a \cap {\mathcal A}_{a_1}={\mathcal A}_a$.
\end{itemize}
\end{theorem}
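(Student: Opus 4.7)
The plan is to convert the geometric question about $\mathcal T$ into decidable questions about normal forms in $G$ together with linear algebra in the free abelian vertex and edge groups.

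I would first compute cyclically reduced normal forms of $a$ and $a_1$ with respect to $\Gamma$. For $a$ (hyperbolic by hypothesis) this produces an explicit fundamental segment $F_a$ of the axis, together with the translation $a\colon F_a \to a\cdot F_a$, so that $\mathcal A_a = \bigcup_{k\in\Z} a^k\cdot F_a$ has an effective periodic description. The same procedure decides whether $a_1$ is elliptic or hyperbolic and, in the hyperbolic case, yields a fundamental segment of $\mathcal A_{a_1}$.

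The core subroutine is a test: given a vertex or edge $x$ of $\mathcal A_a$ presented as a coset $g G_v$ (respectively $g G_e$), decide whether $x\in\mathcal A_{a_1}$. When $a_1$ is elliptic, this reduces to testing $g^{-1}a_1 g\in G_v$ (or $G_e$), a membership question in a finitely generated subgroup of $\Z^n$, solvable by linear algebra. When $a_1$ is hyperbolic, $x\in\mathcal A_{a_1}$ iff $d(x,a_1\cdot x)=\ell(a_1)$, which is read off from the normal form of $a_1$. With this subroutine in hand, the algorithm probes outward from $F_a$ in both directions along $\mathcal A_a$. Since $\mathcal A_a\cap\mathcal A_{a_1}$ is a convex subset of the line $\mathcal A_a$, the two one-sided probes suffice to distinguish the four alternatives and, when the intersection is bounded, to pin down its endpoints. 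In the disjoint case, a shortest bridge is obtained by projecting any vertex of $F_a$ onto $\mathcal A_{a_1}$ and then projecting the result back to $\mathcal A_a$; both projections are effective via the same normal form machinery.

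The main obstacle is to make these outward probes terminate in a bounded number of steps: one must produce an effective $K$ such that if the first $K$ vertices beyond $F_a$ in the positive (respectively negative) direction already lie in $\mathcal A_{a_1}$, then the whole positive (resp.\ negative) ray does. For $a_1$ elliptic, fixing a tail of $\mathcal A_a$ amounts to $a_1$ lying in an intersection $\bigcap_{k\geq 0} a^k H a^{-k}$, where $H$ is the pointwise stabilizer of $F_a$ (an explicit finitely generated free abelian subgroup). The vGBS structure -- the finiteness of $\Gamma$ combined with the commutation rules in the vertex groups -- should provide enough periodicity in the conjugates $a^{-k}a_1 a^k$ inside the finitely generated free abelian groups involved to certify stabilization after finitely many steps, and it is precisely the effective packaging of this certification that will require the most care.
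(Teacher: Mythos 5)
You have the right skeleton---computing fundamental domains and translation lengths from normal forms, testing membership of a vertex of $\mathcal A_a$ in $\mathcal A_{a_1}$ by a subgroup-membership question in $\Z^n$, using convexity of $\mathcal A_a\cap\mathcal A_{a_1}$ in the line $\mathcal A_a$, and obtaining the bridging path in the disjoint case by constructing and reducing a path between the two subtrees. All of this matches the paper's preliminary machinery (propositions \ref{constructionarbre1} and \ref{constructionarbre}). But the step you flag as ``the main obstacle'' is in fact the entire content of the theorem, and the resolution you sketch does not work. There is no effective $K$, uniform in the group data, such that if the first $K$ vertices beyond a fundamental domain lie in $\mathcal A_{a_1}$ then the whole ray does. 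In $BS(1,2)=\langle a,t\mid tat^{-1}=a^2\rangle$ with $h=t$, the element $a_1=a^{2^N}$ fixes the vertices $w_0,t\cdot w_0,\dots,t^N\cdot w_0$ of the axis of $t$ but not $t^{N+1}\cdot w_0$; as $N$ grows this defeats any candidate bound. The underlying reason is that the decreasing chain of subgroups $H_k=\bigcap_{j=0}^{k}a^jHa^{-j}$ of $\Z^n$ need not stabilize (here $H_k=2^k\Z$), so ``periodicity of the conjugates $a^{-k}a_1a^k$'' does not certify anything after finitely many steps.

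What the paper does instead (sections \ref{modulus} and \ref{charac}) is pass to $\Q$-coefficients: the chain of $\Q$-spans $H_k\otimes\Q$ does stabilize after at most $n$ steps, to a subspace $D_h$ on which conjugation by $h$ induces a computable automorphism $\varphi_h$ (the modulus). Proposition \ref{geom} then characterizes ``some power of $a_1$ fixes a positive half-line'' by an integrality condition: $a_1$ must have a power in $D_h$ and the minimal polynomial of $\varphi_h$ restricted to the $\varphi_h$-invariant subspace generated by $a_1$ must lie in $\Z[X]$. Only after this condition is verified does an effective bound of the kind you want exist: corollary \ref{finitude} shows that, conditionally on the power criterion, $a_1$ itself fixes the positive ray if and only if it fixes a segment of length $l(a)\cdot\dim D_{a_1}$ (because the companion matrix of $\varphi_h$ in the basis $\varphi_h^k(a_1)$ is then forced to have integer entries, so the lattice $D_{a_1}\cap G_v$ is preserved). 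Your proposal is missing this two-stage criterion entirely, and the unconditional bound it relies on is false; the hyperbolic case of $a_1$ also needs an extra reduction (the paper replaces $a_1$ by the elliptic commutator $[a_1,a]$ once the overlap exceeds $l(a)+l(a_1)$) before the elliptic criterion can be applied.
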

We give a proof of this theorem in section \ref{charac}.

As $C_G(a) \simeq \Z^p \rtimes \Z$ with $\Z^p$ fixing $\mathcal A_a$ pointwise, the $\Z$ coordinate of $g$ is fully determined as soon as ${\mathcal A}_a \cap {\mathcal A}_{a_1}\neq{\mathcal A}_a$ (if ${\mathcal A}_a \cap {\mathcal A}_{a_1}={\mathcal A}_a$,  the problem reduces to a problem of conjugacy in polycyclic groups which is solved \cite{BaCaRo}). We are then reduced to finding the $\Z^p$ coordinate, that is finding a conjugating element in a given elliptic subgroup.
We discuss this problem in part \ref{algo}.

%
%


In the case of \GBS groups, an idea of V. Guirardel and G. Levitt permits to solve the conjugacy problem for elliptic elements. We thus have the following result.
\begin{theorem}\label{thgbs} Let $G$ be a \GBS group and $\Gamma$ a \GBS decomposition of $G$.
The multiple conjugacy problem is decidable in $G$.
\end{theorem}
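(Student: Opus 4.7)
The plan is to split on whether $\langle A \rangle$ is elliptic in the Bass--Serre tree $\mathcal T$ of $\Gamma$. Ellipticity of $\langle A \rangle$ is decidable in a \GBS group: for each $a_i$ one computes a normal form in $\Gamma$, which reveals whether $a_i$ is elliptic and, if so, identifies its fixed subtree in $\mathcal T$; then $\langle A \rangle$ is elliptic iff all the $a_i$ are elliptic and these subtrees share a common vertex, a decidable combinatorial test on subgraphs of $\Gamma$. If $\langle A \rangle$ is not elliptic, apply Theorem~\ref{thintro} and stop. If $\langle A \rangle$ is elliptic but $\langle B \rangle$ is not, the answer is negative.

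In the remaining case both subgroups are elliptic. After explicit conjugation I would assume $A \subset G_{v_A}$ and $B \subset G_{v_B}$ for specific vertices $v_A, v_B$ of $\mathcal T$. Since $\Gamma$ is a \GBS decomposition, $G_{v_A} \cong G_{v_B} \cong \Z$, so $A$ and $B$ become integer $n$-tuples. The question becomes: is there $g \in G$ such that $g v_A$ is a vertex fixed by $\langle B \rangle$ and $g a_i g^{-1} = b_i$ for all $i$?

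For $n = 1$ this is the point at which the Guirardel--Levitt idea enters: the $G$-orbit of an element of a cyclic vertex group is generated by edge moves of the form $p_e \mapsto \pm q_e$ along the edges of $\Gamma$, where $p_e, q_e$ are the inclusion indices of $e$ into its two vertex groups, giving a decidable orbit structure. For $n > 1$ I would proceed by induction: find, using the $n = 1$ case, a candidate $g_0$ conjugating $a_1$ to $b_1$ or report that none exists; replace $B$ by $g_0^{-1} B g_0$ so that $a_1 = b_1$; the remaining conjugator must then lie in $C_G(a_1)$. Since $a_1$ is elliptic and sits in a cyclic vertex group, Bass--Serre theory applied to the action of $C_G(a_1)$ on the fixed subtree $\mathrm{Fix}(a_1) \subset \mathcal T$ yields an effectively computable \vGBS decomposition of $C_G(a_1)$ together with its embedding into $G$. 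The problem then reduces to deciding multiple conjugacy of $(a_2, \dots, a_n)$ and $(b_2, \dots, b_n)$ inside $C_G(a_1)$, and the induction terminates since the tuple length strictly decreases.

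The main obstacle is effectivity of the induction. One must produce, at each step, the \vGBS presentation of the relevant centralizer together with its concrete embedding into the previous group, and verify that either Theorem~\ref{thintro} or the Guirardel--Levitt step applies inside this centralizer. This is where the cyclic nature of the vertex groups is essential: it renders the orbit-generating edge moves finite and makes the recursive centralizer decompositions explicit enough to feed back into the algorithm, so that the scheme both halts and outputs an actual conjugator when one exists.
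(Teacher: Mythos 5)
Your outer structure (decide ellipticity of $\langle A\rangle$, invoke Theorem~\ref{thintro} in the non-elliptic case, reduce the elliptic case to integer tuples in a cyclic vertex group) matches the paper. The elliptic case is where your argument breaks, in two places.

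The fatal one is the induction on tuple length through centralizers. You claim that for elliptic $a_1$ the action of $C_G(a_1)$ on $\mathrm{Fix}(a_1)$ yields an effectively computable \vGBS decomposition of $C_G(a_1)$, so that the problem for $(a_2,\dots,a_n)$ can be fed back into the algorithm. This is false in general: in $BS(1,2)=\langle a,t\mid tat^{-1}=a^2\rangle$ the elements $t^{-k}at^{k}$ all commute with $a=t^{-k}a^{2^k}t^{k}$, so $C_G(a)\supseteq\langle t^{-k}at^{k}:k\ge 0\rangle\cong\Z[1/2]$, which is not finitely generated and hence admits no \vGBS decomposition; the fixed subtree of $a$ is infinite with infinite quotient. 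So the recursion cannot even be set up. The paper sidesteps induction entirely by exploiting cyclicity in a different way: since $\langle A\rangle$ lies in a vertex group $\langle a\rangle\cong\Z$, all $a_i$ are powers $a'^{m_i}$ of the single element $a'$ generating $\langle A\rangle$ inside $\langle a\rangle$ (the gcd), and likewise $b_i=b'^{n_i}$; a conjugator for the tuples is forced to carry $a'$ to $b'^{\pm1}$, so multiple conjugacy collapses in one step to single conjugacy of $a'$ and $b'$ plus equality of the exponent tuples. That is the step your proof is missing, and it is the real place where cyclic vertex groups are used.

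Secondarily, for the remaining $n=1$ elliptic case you assert that the edge moves $m\mapsto m\tau_e/\sigma_e$ (applicable when $\sigma_e\mid m$) give "a decidable orbit structure" because there are finitely many of them. Finiteness of the move set does not give decidability of reachability; this is the genuinely hard point. The paper proves it by passing to prime-factorization vectors, turning the orbit problem into a reachability problem for a vector addition system (Petri net), and invoking the decidability of Petri net reachability \cite{Reut}. Without that (or an equivalent argument), the elliptic conjugacy problem for a single element is not established.
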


We give a proof of this theorem in section \ref{sectiongbs}. The conjugacy problem in \GBSn{2} and \GBSn{3} groups remains unknown.

\section{Preliminaries}\label{prelim}

\subsection{Conventions}

\paragraph{Presentation adapted to a graph of groups}\label{presad}
Let $G$ be a  finitely presented group and $\Gamma$ a finite graph of groups with $\pi_1(\Gamma)=G$.   Call $V_\Gamma$  and $E_\Gamma$ the sets of vertices and oriented edges of $\Gamma$. Given an edge $e\in E_\Gamma$, the opposite edge is denoted $\bar e$. We note $G_e$ the edge group of $e$. For a vertex $v\in V_\Gamma$ the vertex group is denoted $G_v$. For each edge $e$, call $\varphi^i_e$ and $\varphi^t_e$ the injections of $G_e$ into the groups of respectively the initial and terminal vertices of $e$. We have $G_e=G_{\bar e}$ and $\varphi^i_{\bar e}=\varphi^t_e$. We denote $\mathcal T$ the Bass-Serre tree of $\Gamma$.


Recall that an element of $G$ is \textit{elliptic} if it fixes a vertex of $\mathcal T$, and \textit{hyperbolic} otherwise. Given a hyperbolic element $g\in G$, it acts by translation on a line $\mathcal A_g$ of $\mathcal T$ called the \textit{axis} of $g$.  We define the \textit{positive direction} on $\mathcal A_g$ (depending on $g$), as the direction of the translation of $g$ on $\mathcal A_g$. A half-line contained in $\mathcal A_g$ is said to be  \textit{$g$-positive} if it is infinite in the positive direction and \textit{$g$-negative} otherwise.
A \textit{fundamental domain} of $g$ is a segment $[v,v']$ of $\mathcal A_g$ in $\mathcal T$  such that $g\cdot v=v'$.

The \textit{translation length} of an element $g$ is denoted $l(g)$.

Assume that $\Gamma$ has  finitely presented vertex and edge groups. 
 We fix a preferred generating set and a presentation of $G$ in the following way:
\begin{enumerate}
 \item we fix a maximal subtree $A$ of $\Gamma$, and call $E_A$ the set of edges of $A$,
 \item for each vertex $v$ of $V_\Gamma$, with vertex group $G_v$, we fix a presentation $\langle a_{v,1},\dots,a_{v,r_v} | r_{v,1}, \dots r_{v,s_v}\rangle$  of $G_v$,
 \item for each edge $e$  with initial vertex $v$ and edge group $G_e$, we fix $\tilde a_{e,1},\dots \tilde a_{e,r_e}$ a generating set of $G_e$ such that $\tilde a_{e,i}=\tilde a_{\bar e,i}$ We define  $a_{e,i}=\varphi_e^i(\tilde a_{e,i})$ and we express the $a_{e,i}$ as words in the generators of $G_v$.
\end{enumerate}

The preferred generating set of $G$ is $\left\{ a_{v,j},~v\in V_\Gamma,~j \in \N\right\}\cup \left\{t_e,~e\in E_\Gamma\right\}$. The element $t_e$ is called the stable letter associated to $e$. The stable letter $t_{\bar e}$ will also be written $\bar t_{e}$.

To obtain a presentation from this generating set, we add the following relations: 
\begin{enumerate}[itemsep=0cm,topsep=0cm]
 \item $\left\{ r_{v,1}, \dots r_{v,s_v}\right\},~\forall v\in V_\Gamma$, 
  \item $t_{\bar e}=t_e^{-1},~\forall e\in E_\Gamma$,
 \item $a_{e,j}=t_ea_{\bar e,j}t_e^{-1},~\forall e\in E_\Gamma~\forall j\in \N$,
 \item $t_{\bar e}=1,~\forall e\in E_A$.
\end{enumerate}

By \cite{Serre}, this is a presentation of $G$ and it will be called a \textit{presentation adapted to the decomposition $\Gamma$}. This presentation defines an injection of the vertex groups of $\Gamma$ into $G$.

If $\Gamma$ is a \vGBS decomposition, we suppose in addition that for each vertex $v$ and edge $e$ , the sets $\left\{a_{v,1},\dots,a_{v,r_v}\right\}$ and $\left\{\tilde a_{e,1},\dots,\tilde a_{e,r_e}\right\}$ are bases of $G_v\simeq \Z^{r_v}$ and $G_e\simeq \Z^{r_e}$.


\paragraph{Loop form}

A \textit{path} between two vertices $v$ and $v'$ in $\Gamma$ is a sequence $(v=v_0,e_1,v_1,\dots,e_n,v_n=v')$ where the $v_i$ are vertices of $\Gamma$ and the $e_i$ are edges of $\Gamma$ with starting point $v_{i-1}$ and endpoint $v_i$. The integer $n$ is the \textit{length} of the path. A \textit{loop} is a path between a vertex $v$ and itself.

Similarly, a \textit{path} in $\mathcal T$  is a sequence $(v_0,e_1,v_1,\dots,e_n,v_n)$ where the $v_i$ are vertices of $\mathcal T$ and the $e_i$ are edges of $\mathcal T$ with initial vertex $v_{i-1}$ and terminal vertex $v_i$.
  
 Let $G$ be a \GGBS  group given by a presentation adapted to a \GGBS decomposition $\Gamma$. Let  us fix a \textit{base vertex} $v_0$ of $\Gamma$. 
 
 Let  $l=(v_0,e_1,\dots, e_{p},v_p=v_0)$ be a loop of $\Gamma$ whose base point is $v_0$ and let $g$ is an element of $G$. 
A \textit{loop form} of $g$ (of basis the loop $l$) is a word $c_0t_1c_1t_2\dots t_pc_p$ representing $g$ in $G$, with each $c_i$ a -possibly trivial- element of the vertex group $G_{v_i}$, and $t_i$ the stable letter associated to the edge $e_i$.The terms $c_i$ are called \textit{vertex terms} and the $t_i$ are called \textit{edge terms}. The integer $p$ is the \textit{length of the loop form}.


If $l$ is fixed, a loop form of $g$ of basis $l$ may not exist. However, there always exists some loop form of~$g$:

\begin{lemma} \label{colsimple}
Let $g$ be an element of $G$ given as a word in the preferred generators of $G$. We may explicitly construct a loop form of $g$.

Moreover elements which fix a common vertex in $\mathcal T$ admit loop forms with a common base loop.
\end{lemma}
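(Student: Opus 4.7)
The key observation is that relation~4 of the presentation gives $t_e=1$ in $G$ for every $e\in E_A$, so we may freely insert or delete stable letters of tree edges in any word without altering the element represented. The plan is to exploit this slack to turn any word for $g$ into a loop form, and, in the ``moreover'' clause, to arrange for the procedure to produce a common base loop.

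For the first assertion, I would scan the given word $w=x_1\cdots x_m$ representing $g$ from left to right while maintaining a current vertex $u\in V_\Gamma$, initialized at $v_0$. Before emitting $x_i$, insert the unique sequence of stable letters along the $A$-path from $u$ to the vertex attached to $x_i$: namely $v$ if $x_i=a_{v,j}^{\pm 1}$, or the initial vertex of $e$ if $x_i=t_e^{\pm 1}$. Then emit $x_i$ and update $u$ (to $v$ in the first case; to the terminal vertex of $e$ in the second). After processing $x_m$, append the $A$-path from the final $u$ back to $v_0$. The inserted tree letters are trivial in $G$, so the new word still represents $g$; its edge letters by construction trace a loop at $v_0$ in $\Gamma$; and consecutive vertex generators automatically lie in a common vertex group, because $u$ is updated only when an edge letter is emitted. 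Grouping these runs into single vertex terms yields a loop form. The only subtle bookkeeping point is this automatic matching between $u$ and the vertex attached to $x_i$ at the moment of emission, which is guaranteed by the intervening tree insertion.

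For the moreover part, let $g_1,\ldots,g_k$ fix a common vertex $\tilde v\in\mathcal T$. Write $\tilde v=gG_v$ in Bass--Serre coordinates, so $\mathrm{Stab}_G(\tilde v)=gG_vg^{-1}$ and each $g_i=gh_ig^{-1}$ with $h_i\in G_v$. Running the variant of the above algorithm that closes at $v$ rather than $v_0$ on any word representing $g$ produces a ``path form'' $g=c_0\,t_{e_1}\,c_1\cdots t_{e_p}\,c_p$ along an edge path $(v_0,e_1,u_1,\ldots,e_p,u_p=v)$ in $\Gamma$, with $c_j\in G_{u_j}$. Substituting and using $t_e^{-1}=t_{\bar e}$ gives
\[
g_i = c_0\, t_{e_1}\, c_1\,\cdots\, t_{e_p}\,(c_p h_i c_p^{-1})\, t_{\bar e_p}\,\cdots\, c_1^{-1}\, t_{\bar e_1}\, c_0^{-1},
\]
which is a loop form of $g_i$ based on the out-and-back loop $(v_0,e_1,u_1,\ldots,e_p,v,\bar e_p,u_{p-1},\ldots,\bar e_1,v_0)$. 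This loop depends only on $\tilde v$, and hence serves as a common base loop for every $g_i$. There is no serious obstacle: both assertions reduce to careful bookkeeping enabled by the triviality of tree stable letters in $G$.
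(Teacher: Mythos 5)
Your proof is correct and rests on the same two ideas as the paper's: inserting the (trivial) stable letters of maximal-subtree edges to make the edge letters trace a loop at $v_0$, and, for the ``moreover'' clause, simultaneously conjugating the elements into a single vertex group so that one fixed out-and-back path yields a common base loop. The only difference is presentational — the paper builds a loop form for each preferred generator and concatenates, while you do a single left-to-right sweep — and your handling of inverse edge letters should just be read as first rewriting $t_e^{-1}$ as $t_{\bar e}$, as the paper's convention allows.
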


\proof
Let $g_1=a_1t_1\dots a_p$ and $g_2=b_1s_1\dots b_q$ be two elements expressed in some loop form. 
Then a loop form of $g_1g_2$ is $a_1t_1\dots (a_pb_1)s_1\dots b_q$, and a loop form of $g_1^{-1}$ is $a_p^{-1}\bar t_p\dots \bar t_1a_1^{-1}$.

Thus, to construct a loop form for $g$ it is sufficient to construct one for the stable letters  and
 the generators of the vertex groups.
 
 Let $t$ be the stable letter associated to an edge $e$. Call $v$ and $v'$the initial and the terminal vertices of $e$. Let $(v_0,e_1,\dots, e_{p},v_p=v)$ and $(v_0,e'_1,\dots, e'_{q},v'_q=v')$ be paths between $v_0$ and $v$, and $v_0$ and $v'$ in $A$. Then a loop form for $t$ is $1 t_{e_1} 1 \dots 1 t_{e_p}1t1\bar t_{e'_q}\dots \bar t_{e'_1} 1$.
 
Let $v$ be a vertex of $\Gamma$ and  $(v_0,e_1,\dots, e_{p},v_p=v)$ be a path between $v_0$ and $v$ in $A$. Then any element $a$ of $G_v$ admits a loop form $1 t_{e_1}1\dots 1 t_{e_p}at_{\bar e_p}\dots t_{\bar e_1}1$. 

We may notice that in both cases every element $t_{e_i}$ or $t_{\bar e_i}$ is trivial.

The moreover part follows immediately from the previous paragraph, if we notice \begin{itemize}[noitemsep]\item that if $a$ and $b$ have loop forms with a common base loop, then the same holds for $gag^{-1}$ and $gbg^{-1}$ and \item that elements which fix a common vertex in $\mathcal T$ may be simultaneously conjugated into the group of a vertex of $\Gamma$.\hfill$\qedsymbol$\end{itemize}

A loop form $c_0t_1c_1t_2\dots t_pc_p$ is a \textit{reduced form} if moreover we have the implication 
$e_i=\bar e_{i+1} \Rightarrow c_i\not\in \varphi_t^{e_i}(G_{e_i}).$

\begin{lemma}[{\cite[Theorem 11]{Serre}}]\label{formenormale}
 The trivial element has a unique reduced form, the basis of this loop form is the trivial loop $(v_0)$ of length $0$.
\end{lemma}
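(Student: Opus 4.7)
The plan is to use the action of $G$ on the Bass–Serre tree $\mathcal T$ to convert the statement into one about walks in a tree. Given a loop form $c_0 t_1 c_1 \cdots t_p c_p$ of an element $g$ based at the loop $(v_0,e_1,v_1,\dots,e_p,v_p=v_0)$, fix a lift $\tilde v_0 \in \mathcal T$ of $v_0$. Starting from $\tilde v_0$, I would recursively define lifts $\tilde e_i$ and $\tilde v_i$ by the partial products $h_i = c_0 t_1 c_1 \cdots t_i c_i$, namely $\tilde v_i = h_i \cdot \tilde v_0$ and $\tilde e_i$ the lift of $e_i$ beginning at $\tilde v_{i-1}$. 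This produces a path $(\tilde v_0, \tilde e_1, \dots, \tilde e_p, \tilde v_p)$ in $\mathcal T$, and $g=1$ is equivalent to $\tilde v_p = \tilde v_0$.

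The heart of the argument is to show that the reduction hypothesis $e_i = \bar e_{i+1} \Rightarrow c_i \notin \varphi^t_{e_i}(G_{e_i})$ translates exactly to the absence of backtracking in this lifted path. A backtrack at index $i$ means $\tilde e_{i+1} = \bar{\tilde e_i}$; this forces $e_{i+1} = \bar e_i$ in $\Gamma$, and then, writing everything in the stabilizer of $\tilde v_i$ (which is a conjugate of $G_{v_i}$ and contains a conjugate of $\varphi^t_{e_i}(G_{e_i})$ as stabilizer of $\tilde e_i$), the condition $\tilde e_{i+1} = \bar{\tilde e_i}$ becomes precisely $c_i \in \varphi^t_{e_i}(G_{e_i})$. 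Hence a reduced loop form lifts to a non-backtracking closed path as soon as $g=1$.

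Since $\mathcal T$ is a tree, the only non-backtracking closed path is the trivial one of length zero. Therefore $p = 0$ and the loop form reduces to $c_0 \in G_{v_0}$; the injectivity of $G_{v_0}\hookrightarrow G$ given by the presentation then forces $c_0 = 1$. Uniqueness in the statement is a consequence of the same fact applied to the quotient of two competing reduced forms: using the concatenation and inversion rules from the proof of Lemma \ref{colsimple}, two distinct reduced forms for $1$ would yield a non-trivial reduced closed walk in $\mathcal T$, contradicting the tree property.

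The main obstacle is the careful bookkeeping in the second paragraph: one has to conjugate the edge and vertex stabilizers by the partial products $h_{i-1}$ and verify that the algebraic statement $c_i \notin \varphi^t_{e_i}(G_{e_i})$ matches non-backtracking in $\mathcal T$ on the nose (including the side on which $c_i$ is acting and the identification $G_e = G_{\bar e}$, $\varphi^i_{\bar e}=\varphi^t_e$). Once this dictionary is set up, the rest of the proof is essentially the observation that trees have no closed reduced loops.
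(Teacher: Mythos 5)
The paper does not prove this lemma at all: it is quoted verbatim from Serre (\cite[Theorem 11]{Serre}), so there is no internal proof to compare against. Your argument is the standard geometric proof of the normal form theorem --- translate a loop form into an edge path in $\mathcal T$, show that reducedness of the word is exactly non-backtracking of the path, and conclude from the absence of non-trivial reduced closed paths in a tree --- and the dictionary you describe (backtracking at $\tilde v_i$ forces $e_{i+1}=\bar e_i$ and $c_i\in\varphi^t_{e_i}(G_{e_i})$, after conjugating stabilizers by the partial products) is the right one. Two caveats. First, a bookkeeping slip: $\tilde v_i$ should be $h_{i}$ applied to the \emph{fixed lift of $v_i$} in $\tilde A$ (as in the path written out in the proof of Proposition \ref{constructionarbre1}), not $h_i\cdot\tilde v_0$, which is a lift of $v_0$ and in general not even a vertex in the orbit of $v_i$. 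Second, and more substantively, your proof presupposes that $\mathcal T$ is a tree and that $G_{v_0}$ injects into $G$ with stabilizer of $\tilde v_0$ equal to $G_{v_0}$; in Serre's development both of these facts are \emph{consequences} of Theorem 11, so as a replacement for the citation your argument is circular unless you supply an independent construction of $\mathcal T$ as a tree (e.g.\ topologically, as the quotient of the universal cover of a graph of spaces) or fall back on the inductive reduction to the amalgam and HNN cases. Within this paper, which takes Bass--Serre theory as given, that is a harmless choice of foundations, but it is worth being explicit that you are proving the lemma \emph{from} the tree rather than reproving Serre's theorem from scratch. Your final paragraph on uniqueness via concatenating two reduced forms is redundant: once $p=0$ is forced, the only reduced form of $1$ is the empty word $c_0=1$, and uniqueness is immediate (the concatenation argument is what the paper uses for the separate Lemma \ref{baseloop}).
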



In general a reduced form is not unique. We can however exhibit a kind of uniqueness:

\begin{lemma}\label{baseloop}
 If $g$ admits two reduced forms, their base loops are equal.
\end{lemma}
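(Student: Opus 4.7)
The plan is to apply Lemma \ref{formenormale} to the product $w\cdot(w')^{-1}$, where $w=c_0t_1c_1\dots t_pc_p$ and $w'=c'_0s_1c'_1\dots s_qc'_q$ are the two reduced forms of $g$ with base loops $l$ and $l'$. This product is a loop form at $v_0$ of length $p+q$, based at the loop $l\cdot\bar{l'}$, representing the trivial element of $G$. Lemma \ref{formenormale} says the unique reduced form of the identity has length $0$, so if $p+q>0$ the product must contain a pinch.

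The crucial observation is that this pinch cannot lie in the interior of either side. A pinch at position $i\in\{1,\dots,p-1\}$ of $w\cdot(w')^{-1}$ would be exactly a pinch of $w$, forbidden by reducedness; similarly, using $\varphi^t_e=\varphi^i_{\bar e}$, a pinch at a position in the $(w')^{-1}$-part translates to a pinch of $w'$, again forbidden. The only admissible pinch is therefore at the junction, which forces the last edges to agree, $e_p=e'_q$, and the joined vertex term $c_p(c'_q)^{-1}$ to lie in $\varphi^t_{e_p}(G_{e_p})$.

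I would then argue by induction on $p+q$. Collapsing the junction pinch replaces the block $t_{e_p}(c_p(c'_q)^{-1})\bar t_{e_p}$ by a single element of $G_{v_{p-1}}$, producing a strictly shorter loop form still representing the identity, whose base loop is $l\cdot\bar{l'}$ with the matched pair of edges removed; since this modification only alters the vertex terms adjacent to the cancelled edges, the residual form inherits reducedness away from the new junction and the same argument applies. Iterating, one matches $e_{p-k}=e'_{q-k}$ (and thus $v_{p-k}=v'_{q-k}$) for every $k$. A length mismatch $p\neq q$ is ruled out, because it would leave a non-trivial reduced loop form representing $1$, contradicting Lemma \ref{formenormale}. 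The main obstacle is the bookkeeping at each step: one must check that collapsing a junction pinch does not create spurious pinches elsewhere, so that the iteration can continue to be fed by Lemma \ref{formenormale} until all edges are exhausted.
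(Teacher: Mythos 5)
Your proposal is correct and follows essentially the same route as the paper: multiply one reduced form by the inverse of the other, invoke Lemma \ref{formenormale} to force a cancellation, observe that reducedness of each factor confines the cancellation to the junction, and induct on $p+q$. The bookkeeping point you flag (that collapsing the junction does not create pinches in the interior of either factor) is exactly the observation the paper relies on implicitly, and your treatment of it is sound.
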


\begin{proof}
 Take $c_0t_1c_1t_2\dots t_pc_p$ and $c'_0t'_1c'_1t'_2\dots t'_qc'_q$ the two reduced form. As the product of the first expression by the inverse of the second $$c_0t_1c_1t_2\dots t_p(c_pc'^{-1}_q)t'^{-1}_qc'^{-1}_{q-1}t'^{-1}_{q-1}\dots t'^{-1}_1c'^{-1}_0$$ is a loop form (up to replacing $t'^{-1}_i$ by $\bar t'_i$) representing the trivial element, by lemma \ref{formenormale}, this
loop form is not reduced. But the only simplification may occur at $t_pc_pc'^{-1}_qt'^{-1}_q$. 

So $t_p=t'_q$ and $c_pc'^{-1}_q$ belongs to the associated edge group. We obtain a new loop form. By recurrence on the length $p+q$, we must have $p=q$, and the loops must be identical.
\end{proof}

Moreover in \GGBS decompositions we may explicitly construct a reduced form from a loop form. 
As the matter of fact, we may algorithmically construct reduced forms whenever $G$ is the fundamental group of a graph of groups $\Gamma$ (with finitely presented vertex and edge groups) whenever the following membership problem (P1) is solvable:
\begin{probleme}[P1]
Let $e$ be an edge of $\Gamma$ with initial vertex $v$. Given $g\in G_v$, does $g$ belong to $\varphi_e^i(G_e)$?
\end{probleme}

Given a loop form $c_0t_1c_1t_2\dots t_pc_p$, we may compute a reduced form using the following process.  If there exists an $i$ such that $e_i=\bar e_{i+1}$  and $c_i\in \varphi_t^{e_i}(G_{e_i})$, we may find $d_i\in G_{e_i}$ such that $c_i=\varphi_t^{e_i}(d_i)$, we then replace $c_{i-1}t_ic_it_{i+1}c_{i+1}$ by $(c_{i-1}d_ic_{i+1})$ in the loop form.
Since the edge groups of $\Gamma$ are finitely presented, it permits us here to express the preimage by $\varphi_e^i$ of any element in $\varphi_e^i(G_e)$ in term of generators of $G_e$.

Thanks to lemma \ref{formenormale}, we can assert that the word problem is solvable whenever (P1) and the word problem for vertex groups are solvable.
 

\subsection{Algorithmic generalities}

We fix $\tilde A$ a lift of $A$ in $\mathcal T$ such that the lift $\tilde v$ of a vertex $v$ in $\tilde A$ has stabilizer $G_v$.
For each edge $e$ in $\Gamma\setminus A$, with initial and terminal vertices $v$ and $v'$, we fix a lift $\tilde e$ in $\mathcal T$ defined as the edge between $\tilde v$ and $t_e\cdot \tilde v'$.
We may then identify each vertex $v$ and edge $e$ of $\Gamma$ with a vertex $\tilde v$ and an edge $\tilde e$ of $\mathcal T$.

Note that the lift of $\bar e$ is not $\bar {\tilde e}$ but $t_{\bar e}\bar {\tilde e}$.

\begin{convention}\label{sommet}A vertex $\tilde w$ of $\mathcal T$ is now seen as a product $g\cdot v$ with $g$ an element of $G$ and $v$ a vertex of $\Gamma$, such that $\tilde w=g\cdot \tilde v$.  We may notice that $g$ is not unique. 
\end{convention}

The following algorithmic property gives us a kind of constructibility of the Bass-Serre tree $\mathcal T$:

\begin{proposition}\label{constructionarbre1}
 Let $G$ be a group with decomposition $\Gamma$. Assume that (P1) is solvable in $\Gamma$. Let $g_v$ and $g_v'$ be two elements given as words in preferred generators of $G$. Let $v=g_v\cdot v_i$ and $v'=g_{v'}\cdot v_j$ be two vertices of $\mathcal T$. We may compute the distance between $v$ and $v'$ in $\mathcal T$ and construct the minimal path between them in the tree. 
 
 Given any finite set $\mathcal V$ of vertices of $\mathcal T$, we may explicitly construct the convex hull of $\mathcal V$ in $\mathcal T$.
 \end{proposition}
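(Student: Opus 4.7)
The plan is to reduce both assertions to the algorithm that converts a loop form into a reduced form, whose existence in our setting is guaranteed by the solvability of (P1). Since $G$ acts on $\mathcal T$ by isometries, $d(v,v') = d(v_i,g\cdot \tilde v_j)$ where $g = g_v^{-1}g_{v'}$, so I may assume that one endpoint is the fixed lift $\tilde v_i$.

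I would first generalize the notion of loop form to that of a \emph{path form} from $v_i$ to $v_j$: a word $c_0 t_{e_1}c_1\cdots t_{e_p}c_p$ where the underlying sequence $e_1,\dots,e_p$ traces a path in $\Gamma$ from $v_i$ to $v_j$ and each $c_k$ lies in the vertex group of the vertex reached after $e_k$. To produce such a path form representing $g$, use Lemma \ref{colsimple} to write $g$ in loop form $w$, and let $\alpha_i,\alpha_j$ be the $A$-loop forms from $v_0$ to $v_i$ and from $v_0$ to $v_j$ constructed in the proof of Lemma \ref{colsimple}, whose stable letters are all trivial in $G$; then $\overline{\alpha_i}\cdot w\cdot \alpha_j$ is a path form from $v_i$ to $v_j$ still representing $g$.

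Next, apply the reduction process described before the statement: whenever a subword $t_{e_k}c_k t_{\bar e_k}$ occurs with $c_k\in\varphi^t_{e_k}(G_{e_k})$ — which (P1) lets us test and invert — collapse it. The edges of the resulting reduced path form describe the geodesic in $\mathcal T$ from $\tilde v_i$ to $g\cdot \tilde v_j$, and each intermediate vertex is recovered explicitly as $g_v\cdot(c_0 t_{e_1}c_1\cdots t_{e_m})\cdot v_m$ in the notation of Convention \ref{sommet}. Its length is $d(v,v')$. For the convex hull, use the tree identity $\mathrm{Hull}(\mathcal V) = \bigcup_{w,w'\in\mathcal V}[w,w']$ (or equivalently, iteratively attach to the current hull the segment from the next vertex to its closest point in the hull) and invoke the first part on each pair.

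The main obstacle is justifying that the reduction process really outputs the geodesic, that is, a path-form analogue of Lemma \ref{baseloop}. The argument is a direct transcription of the loop-form case: if two reduced path forms $P_1,P_2$ connect the same vertices of $\mathcal T$ and represent the same element of $G$, then the concatenation $P_1\cdot\overline{P_2}$ is a loop form of the identity; Lemma \ref{formenormale} forces a cancellation at the junction, so the last edge of $P_1$ matches the last edge of $P_2$ and the middle vertex elements agree modulo the edge group. Induction on $p+q$ then gives that $P_1$ and $P_2$ share the same edge sequence, in particular the same length, so the reduced length is indeed the tree distance and the reduced edge sequence is indeed the geodesic.
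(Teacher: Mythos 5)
Your proof is correct and follows essentially the same route as the paper's: both build a candidate path by concatenating the $A$-paths to the base vertex with the loop forms of the given group elements, then use (P1) to cancel backtracks, and obtain the convex hull from pairwise geodesics. The only difference is presentational: you encode the path algebraically as a ``path form'' and justify minimality by a Britton's-lemma-style uniqueness argument modelled on Lemma \ref{baseloop}, whereas the paper works directly with the corresponding path in $\mathcal T$ and uses that a backtrack-free path in a tree is automatically the geodesic.
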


\begin{proof}
By lemma \ref{colsimple}, we may construct loop forms $c_0t_1c_1t_2\dots t_pc_p$ of $g_v$ and $d_0s_1d_1s_2\dots s_qd_q$ of $g_{v'}$. Let $l=(v_0,e_1,\dots, e_{p},v_p=v_0)$ and  $l'=(w_0=v_0,f_1,\dots, f_{q},w_q=v_0)$ be the base loops of these two loop forms. 

Let $(v_0,\gamma_1,y_1,\gamma_2,\dots,y_k=v_i)$  and $(v_0,\delta_1,z_1,\delta_2,\dots,z_l=v_i)$ be the paths between $v_0$ and $v_i$, and $v_0$ and $v_j$ in $\tilde A$. Then a path between $v$ and $v'$ is the concatenation of the four following paths 
\begin{enumerate}[itemsep=0cm]
\item $(g_v\cdot v_i=g_v\cdot y_k, g_v\cdot \bar \gamma_k,\dots, g_v\cdot \bar \gamma_1, g_v\cdot v_0)$, 
\item $(g_v\cdot w_q=(c_0t_1c_1t_2\dots t_p)\cdot v_0,(c_0t_1c_1t_2\dots c_{p-1}t_p)\cdot \bar e_p,\dots, c_0t_1\cdot \bar e_{1}, c_0 \cdot w_0=v_0)$
\item $(v_0,d_0\cdot f_1, d_0s_1\cdot w_1, d_0s_1d_1\cdot f_2,  d_0s_1d_1s_2\cdot w_2,\dots, d_0s_1d_1s_2\dots s_q\cdot w_q=g_{v'}\cdot v_0)$ 
\item $(g_{v'}\cdot v_0,g_{v'}\cdot\delta_1,g_{v'}\cdot z_1, \dots,g_{v'}\cdot v_j=v')$.
\end{enumerate}

Let us now reduce this path. A path is reduced  if for every triplet $(\tilde e=g_e\cdot e,\tilde w= g_w\cdot w, \tilde f=g_f\cdot f)$ such that $\bar f = e$ appears in the path, we have $\tilde e\neq \bar {\tilde f}$, that is $g_f^{-1}g_e\not \in G_w$. The problem $(P1)$ allows us to check whether the element $g_f^{-1}g_e\in G_w$ belongs to $G_f$ or not. If $g_f^{-1}g_e\in G_f$ we delete the triplet and the immediately following vertex (which is the same as the immediately preceding one). 
Iterating the process, we obtain the minimal path between $v$ and $v'$. 

We obtain the convex hull of a set of vertices $\mathcal V$, by constructing the path between each pair of vertices of $\mathcal V$, and then glue the paths together (by checking if some vertices of these paths are equal).
\end{proof}

\begin{corollary}\label{constructionarbre} Assume that $(P1)$ is decidable.
 \begin{enumerate}
 \item Given $g\in G$, we may compute $l(g)$. If $g$ is elliptic ($l(g)=0$) we may construct a fixed point, and if $g$ hyperbolic, we may  construct a fundamental domain. 
 \item Given a vertex $v$, we may decide if $v$ belongs to the characteristic space of $g$.
\end{enumerate}
\end{corollary}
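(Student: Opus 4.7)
The plan is to reduce everything to distance and path computations in $\mathcal T$, which are available thanks to Proposition \ref{constructionarbre1}. Fix the base vertex $v_0$. The key observation is the standard formula: if $g$ is elliptic with fixed subtree $F$ and $k=d(v_0,F)$, then $d(v_0,g^n\cdot v_0)=2k$ for every $n\neq 0$ with $v_0\notin F$; and if $g$ is hyperbolic with translation length $l(g)$ and $k=d(v_0,\mathcal A_g)$, then $d(v_0,g^n\cdot v_0)=n\,l(g)+2k$ for every $n\geq 1$. Both formulas come from the fact that in a tree the path from $v_0$ to $g^n\cdot v_0$ must pass through the (unique) closest point of $F$ or $\mathcal A_g$ to $v_0$.

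For the first statement, I would compute, using Proposition \ref{constructionarbre1}, the distances $d_1=d(v_0,g\cdot v_0)$ and $d_2=d(v_0,g^2\cdot v_0)$ together with the minimal paths $P_1$ and $P_2$ realizing them. If $d_1=0$ then $v_0$ itself is a fixed vertex of $g$, $g$ is elliptic and $l(g)=0$. Otherwise the two cases are distinguished by whether $d_2=d_1$ (elliptic case) or $d_2=2d_1-2k>d_1$ (hyperbolic case); in the hyperbolic case $l(g)=d_2-d_1$. In the elliptic case, the path $P_1$ is the concatenation of a segment from $v_0$ to some $w\in F$ and its image under $g$, so $w$ is the midpoint of $P_1$ and we output it as a fixed vertex. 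In the hyperbolic case, letting $k=d_1-\frac{d_2}{2}=\frac{2d_1-d_2}{2}$, the vertices $p$ and $g\cdot p$ at distances $k$ and $k+l(g)$ from $v_0$ along $P_1$ lie on $\mathcal A_g$ and bound a fundamental domain $[p,g\cdot p]$.

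For the second statement, given an arbitrary vertex $v$, we first apply the algorithm above (starting from $v$ instead of from $v_0$, which the proof of Proposition \ref{constructionarbre1} allows) to obtain $l(g)$ and then compute $d(v,g\cdot v)$ via Proposition \ref{constructionarbre1}. The vertex $v$ lies on $\mathcal A_g$ (in the hyperbolic case) or is fixed by $g$ (in the elliptic case) exactly when $d(v,g\cdot v)=l(g)$, since this is the minimum of the displacement function of $g$, attained precisely on the characteristic space.

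There is no real obstacle: everything follows mechanically from the standard tree-dynamics of isometries together with the constructibility of paths granted by Proposition \ref{constructionarbre1}. The only mildly delicate point is the bookkeeping needed to locate $p$ along $P_1$ in the hyperbolic case, but this is just reading off vertices at prescribed distances from $v_0$ on an explicitly computed path.
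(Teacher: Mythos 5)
Your overall strategy is sound and close in spirit to the paper's: both reduce everything to the path constructions of Proposition \ref{constructionarbre1}, and your criterion for part 2 (namely that $v$ lies in the characteristic space iff $d(v,g\cdot v)=l(g)$) is exactly the one used there. The difference is in how $l(g)$ and the elliptic/hyperbolic dichotomy are obtained: the paper computes $d(w,g\cdot w)$ for every vertex $w$ of the explicit path $[v,g\cdot v]$ and takes the minimum (which is $0$ in the elliptic case and $l(g)$ in the hyperbolic case, since that path must meet the characteristic space), whereas you compute only the two displacements $d_1=d(v_0,g\cdot v_0)$ and $d_2=d(v_0,g^2\cdot v_0)$ and use the translation-length formula.

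However, one of your two displacement formulas is false. For $g$ elliptic it is not true that $d(v_0,g^n\cdot v_0)=2\,d(v_0,\mathrm{Fix}(g))$ for all $n\neq 0$: the fixed tree of $g^n$ can strictly contain that of $g$, so all one can say is $d(v_0,g^n\cdot v_0)\le 2\,d(v_0,\mathrm{Fix}(g))$. Concretely, in the \GBS group $\langle a,t\mid ta^2t^{-1}=a^4\rangle$ take $g=a^2$ and $v_0=t\cdot v$, where $v$ is the vertex fixed by $\langle a\rangle$: then $a^2$ does not fix $v_0$ but $(a^2)^2=a^4=ta^2t^{-1}$ does, so $d_1=2$ while $d_2=0$. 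Your stated trichotomy (``$d_2=d_1$: elliptic; $d_2>d_1$: hyperbolic'') therefore omits the case $d_2<d_1$, and read literally your algorithm has no branch for it. The repair is easy: hyperbolicity is still equivalent to $d_2>d_1$ (elliptic always gives $d_2\le d_1$, hyperbolic always gives $d_2-d_1=l(g)>0$), and your extraction of a fixed point (the midpoint of $P_1$) and of a fundamental domain (the subsegment of $P_1$ between distances $k$ and $k+l(g)$ from $v_0$, with $k=(2d_1-d_2)/2$) relies only on correct facts. So the argument goes through once the elliptic formula is weakened to an inequality; alternatively you can avoid $g^2$ altogether by following the paper and minimizing $d(w,g\cdot w)$ over the vertices $w$ of $P_1$.
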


\begin{proof}
Let $g$ be an element of $G$ and $v$ a vertex of $\mathcal T$.
By proposition \ref{constructionarbre1} we may construct the minimal path $\mathcal P$ between $v$ and $g\cdot v$.
Now $\mathcal P$ contains a fixed point if $g$ is elliptic, or a fundamental domain if $g$ is hyperbolic. Constructing for each vertex $w$ in $\mathcal P$ the path between $w$ and $g\cdot w$ permits to determine whether $g$ is elliptic or hyperbolic: if one of these paths is of length $0$, the element $g$ is elliptic, otherwise $g$ is hyperbolic and the constructed path of minimal length is a fundamental domain.

For the second part, to see if a vertex $v$ belongs to the characteristic space, it suffices to check if the path between $v$ and $g\cdot v$ has length $l(g)$.
\end{proof}


\section{\texorpdfstring{Algorithmic properties of \GGBS groups}{}}\label{algo}

Until now, we have given general properties of graphs of groups. We now describe algorithmic properties specific to \vGBS groups.

By lemma \ref{formenormale} and the construction of a reduced form, the word problem is solvable in \GGBS groups. However, the vertex groups are free abelian and so the algorithmic properties are much stronger. It is thus possible to solve certain equations in \vGBS groups:

\begin{lemma}\label{lemmetechnique}
Let $G$ be a \GGBS group. Let $g_1,\dots, g_n, a_0, \dots, a_n$ be elements of $G$ with all $a_i$ elliptic. Let $p$ be an integer and $\sigma : [|0,n|] \rightarrow [|1,p|]$ a map. Then the set 
$$K=\{\underbar{k}= (k_1,\dots, k_p) | a_0^{k_{\sigma(0)}}g_1a_1^{k_{\sigma(1)}}g_2\dots g_na_n^{k_{\sigma(n)}}=1\}$$
is a finite union of affine sublattices of $\mathbb Z^p$, \textit{i.e.} a finite union of sets of the form $\left\{x|x=a+g,g\in R\right\}$ where $a$ is an element of $\Z^p$ and $R$ a subgroup of $\Z^p$.

The set $K$ may be algorithmically described.
\end{lemma}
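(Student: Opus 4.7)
The plan is to rewrite $W(\underline{k}) := a_0^{k_{\sigma(0)}} g_1 a_1^{k_{\sigma(1)}} g_2 \cdots g_n a_n^{k_{\sigma(n)}}$ as a single loop form whose base loop and stable letters are fixed and whose vertex terms are affine functions of $\underline{k}$, then to analyze when this loop form reduces to the trivial element. Since each $a_i$ is elliptic, Corollary \ref{constructionarbre} produces an explicit $h_i \in G$ with $b_i := h_i^{-1}a_i h_i$ lying in some vertex group $G_{v_i}\simeq\mathbb Z^{r_{v_i}}$. Substituting $a_i^{k_{\sigma(i)}} = h_i b_i^{k_{\sigma(i)}} h_i^{-1}$, applying Lemma \ref{colsimple} to each of the fixed pieces, and concatenating produces a loop form $L(\underline{k}) = c_0(\underline{k})\,t_1\,c_1(\underline{k})\cdots t_M\,c_M(\underline{k})$ whose base loop and stable letters $t_j$ are independent of $\underline{k}$; in additive notation, the factor $b_i^{k_{\sigma(i)}}$ contributes the linear summand $k_{\sigma(i)}\,b_i$ inside a single vertex term, so each $c_j(\underline{k})$ is indeed an affine function of $\underline{k}$ valued in a free abelian vertex group.

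The key observation will be that affineness is preserved by reductions. A reduction at position $j$ requires $e_j = \bar e_{j+1}$ (a fixed combinatorial condition) and $c_j(\underline{k}) \in \varphi_t^{e_j}(G_{e_j})$. Because $\varphi_t^{e_j}$ is an injective linear map between finitely generated free abelian groups and $c_j$ is affine in $\underline{k}$, this membership condition is a linear Diophantine congruence whose solution set is an affine sublattice of $\mathbb Z^p$ (possibly empty), effectively computable via Hermite or Smith normal form. When the condition holds, the preimage $d_j(\underline{k}) := (\varphi_t^{e_j})^{-1}(c_j(\underline{k}))$ is uniquely defined and itself affine in $\underline{k}$, and so is the merged vertex term $c_{j-1}(\underline{k})\,\varphi_i^{e_j}(d_j(\underline{k}))\,c_{j+1}(\underline{k})$. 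Hence the shorter loop form obtained after the reduction is still of the same type.

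By Lemma \ref{formenormale}, $W(\underline{k}) = 1$ if and only if some sequence of reductions brings $L(\underline{k})$ down to the empty form. I would then enumerate all reduction \emph{schemas}, i.e.\ sequences $(j_1,\dots,j_r)$ of positions at which to reduce in order; since each step drops the length by $2$, at most $M/2$ reductions can occur and the enumeration is finite. For each schema $\tau$ that reduces $L(\underline{k})$ to a single vertex term $c(\underline{k}) \in G_{v_0}$, the conjunction of the membership conditions along $\tau$, together with the final equation $c(\underline{k}) = 1$ in $G_{v_0}$, defines an affine sublattice $S_\tau \subseteq \mathbb Z^p$. Combining over all schemas gives $K = \bigcup_\tau S_\tau$, a finite union of affine sublattices, and the whole construction is effective. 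The delicate point to verify carefully will be the invariance of the affine/Diophantine structure under each reduction step, together with the finiteness of the schema enumeration, so that the various conditions can be assembled into a computable affine sublattice at the end.
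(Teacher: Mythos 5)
Your overall strategy is the same as the paper's: put $W(\underline{k})$ into a loop form with a fixed base loop and vertex terms affine in $\underline{k}$, then analyze reductions, with each membership condition $c_j(\underline{k})\in\varphi_t^{e_j}(G_{e_j})$ cutting out a computable affine sublattice and $K$ obtained as a finite union over reduction possibilities (you unroll the paper's recurrence on loop length into an enumeration of complete reduction schemas, which is equivalent). The first paragraph and the finiteness of the enumeration are fine.

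The genuine gap is exactly the point you defer to the end. After one reduction, the merged vertex term $c_{j-1}(\underline{k})\,\varphi_i^{e_j}\bigl((\varphi_t^{e_j})^{-1}(c_j(\underline{k}))\bigr)\,c_{j+1}(\underline{k})$ is \emph{not} "of the same type": the preimage $(\varphi_t^{e_j})^{-1}(c_j(\underline{k}))$ is only defined for $\underline{k}$ in the sublattice $T_j$ where the membership condition holds, and even there the resulting map is an affine map \emph{on $T_j$}, not a map of the form $c_0+\sum_l k_l c_l$ with $c_l$ in the vertex group defined on all of $\Z^p$. So the inductive invariant you rely on ("the shorter loop form is still of the same type") fails as literally stated, and the iteration does not close up without an extra device. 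Two repairs are available: (i) the paper's: extend $\varphi_i^{e_j}\circ(\varphi_t^{e_j})^{-1}$ $\Q$-linearly to a map $\psi_j$ on $G_{v_j}\otimes\Q$ (killing a complement of $\varphi_t^{e_j}(G_{e_j})\otimes\Q$), so that $\psi_j\circ c_j$ is globally defined but takes values in $\frac{1}{r}G_{v_{j-1}}$; this forces strengthening the induction hypothesis to allow vertex-term coefficients with bounded denominators, with integrality recorded separately by the sublattice $S_j$; or (ii) reparametrize $T_j=a+R\cong\Z^{p'}$ after each reduction and rewrite all remaining vertex terms as affine functions of the new parameters, pulling the final sublattice back to $\Z^p$ at the end. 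Either works, but one of them must be carried out; as written, the claim that "the shorter loop form obtained after the reduction is still of the same type" is the unproved heart of the lemma.
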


\proof All elements which may be written under the form $a_0^{k_{\sigma(0)}}g_1\dots g_na_n^{k_{\sigma(n)}}$, have loop forms with a common base loop: for a given $i$ the elements of the form $a_i^{k_{\sigma(i)}}$ all belong to the same vertex group, and thus by lemma \ref{colsimple} have loop forms with a base loop which does not depend on the $\underbar{k}$. Thus it suffices to construct a loop form to each $g_i$  and then to glue the different part together (see lemma \ref{colsimple}).

We obtain a common loop form $$a_0^{k_{\sigma(0)}}g_1a_1^{k_{\sigma(1)}}g_2\dots g_na_n^{k_{\sigma(n)}}=w_0(\underline{k})t_1w_1(\underline{k})\dots w_{i-1}(\underline{k})t_iw_i(\underline{k})\dots t_mw_m(\underline{k})$$
where the $t_i$ are edge terms and the $w_i$ vertex terms depending on $\underbar k$. Moreover if $v_i$ is the vertex associated to $w_i$, there exist $p+1$ elements $b_{i,0},\dots b_{i,p}$ (non necessarily distinct) which belong to the vertex group $G_{v_i}$ and such that we have the equality $w_i(\underline{k})=b_{i,0}\prod_{j=1}^p b_{i,j}^{k_{j}}$.

The lemma is now reduced to the following equivalent fact:

Given a loop $l=(v_0,e_1,\dots, e_m,v_m)$ of length $m$, and elements $(b_{i,j})$ for $i\in[|0,m|]$ and  $j\in [|0,p|]$ with $b_{i,j}\in G_{v_i}$, we may algorithmically describe the set 
$$K=\{\underbar{k}= (k_1,\dots, k_p) |b_{0,0}(\prod_{j=1}^pb_{0,j}^{k_j})t_1b_{1,0}(\prod_{j=1}^pb_{1,j}^{k_j})t_2\dots t_mb_{m,0}(\prod_{j=1}^pb_{m,j}^{k_j})=1\}$$ as a finite union of sublattices of $\Z^p$.

The proof will be performed by recurrence on $m$. For the need of the recurrence, we only suppose that there exists $r\in \N$ such that the $b_i$ belongs to $\frac{1}{r}G_{v_i}=G_{v_i}\otimes_\Z \frac{1}{r}\Z$, the set of vectors of which coefficients are fractions with $r$ as denominator. We may then see the $w_i$'s as maps from $\Z^p$ to $\frac{1}{r}G_{v_i}$. For each $w_i$ call $\tilde w_i$ the morphism defined by $\tilde w_i(\underbar k)=\prod_{j=1}^p b_{i,j}^{k_{j}}$. Unlike $w_i$, the morphism $\tilde w_i$ is a group homomorphism.

The goal is now to construct the set of $p$-tuples $\underline k=(k_1,\dots k_p)\in \Z^p$ such that for all $i$, the element $w_i(\underline{k})$ belongs to $G_{v_i}\subset \frac{1}{r}G_{v_i}$ and such that the equality
 $$w_0(\underline{k})t_1w_1(\underline{k})\dots w_{i-1}(\underline{k})t_iw_i(\underline{k})\dots t_mw_m(\underline{k})=1$$ is verified. We also have to show that this set is an affine sublattice of $\Z^p$. The lemma is the case $r=1$.

By recurrence on the length $m$ of the base loop of $w$.
\begin{enumerate}[leftmargin=0cm]
 \item If $m=0$. We have to find the set $K$ of $\underline k=(k_1\dots k_p)\in\Z^p$ such that $w_0(\underline{k})=b_{0,0}\prod_{j=1}^p b_{0,j}^{k_{j}}=1$.

The set of solutions is just the set $w_0^{-1}(\{1\})$. Computing it consists in solving the linear equation $b_0\prod_{j=1}^p b_j^{k_{j}}=1$ in $\Z^p$. We deduce that the set is of the form $\left\{x|x=a+g,g\in R\right\}$ where $a$ is a solution and $R$ is the kernel of $\tilde w_0$.

Thus, in this case, the solutions forms an affine sublattice which may be explicitly described.

 \item If $m>0$.

Using lemma \ref{formenormale} , if  $w(k_1,\dots,k_p)$ is trivial, there exists $i$ such that $t_i$ is associated to an edge $e_i$ and $t_{i+1}$ to $\bar e_i$ and such that $w_i \in \varphi_{e_i}^i(G_{e_{i}})$ the associated edge group.

Let $i$ be such that $t_i$ is associated to an edge $e_i$ and $t_{i+1}$ to $\bar e_i$. We first compute $ S_i=\{\underline{k} | w_i(\underline{k})\in G_{e_{i}}\}$.

Let us call $\pi_i$ the projection of $G_{v_i}$ onto $G_{v_i}/\varphi_{e_i}^i(G_{e_i})$.  Then $S_i$ may be put under the form $\left\{x|x=a+g,g\in R\right\}$ where $a$ is a solution of the equation $\pi_i\circ w_i =1$ and where $R$ is the kernel of $\pi_i\circ\tilde w_i$. Thus $S_i$ is an affine sublattice of $\Z^p$ that we may explicitly describe.

Let $V_j$ be a vector space such that $G_{v_{j}}\otimes\Q=(\varphi^t_j(G_{e_{j}})\otimes\Q)\oplus V_j$ and let us define a morphism $\psi_j:G_{v_{j}}\otimes \Q\rightarrow G_{v_{j-1}}\otimes Q$ in the following way:

For $g\in\varphi^t_{e_j}(G_{e_{j}})$, the element $\psi_j(g)=(\varphi^i_{e_j})_{\otimes \Q}\circ(\varphi^t_{e_j})_{\otimes \Q}^{-1}(g)$, and if $g\in V_j$ then $\psi_j(g)=1$.
Note that if $\varphi^i_{e_j}$ and $\varphi^t_{e_j}$ are given as matrices, then $\psi_j$ may be explicitly determined as a matrix.

Let $J \subset [|1,p-1|]$, the set of $j$ such that $e_j= -e_{j+1}$. Then the set of solutions $K$ is
$$\bigcup_{j\in J}
S_j\cap \{\underline{k} |w_0(\underline k)t_1\dots t_{j-1}(w_{j-1}(\underline{k})\psi_j(w_j(\underline{k}))w_{j+1}(\underline{k}))t_{j+2}\dots t_mw_m(\underline k)=1\}.$$

Note that if $\underline k$ belongs to $S_j$, then the element $\psi_j(w_i(\underline{k})))$ 
 belongs to $G_{v_{j-1}}$ (and not only to $G_{v_{j-1}}\otimes \Q$) .

To be able to apply the recurrence hypothesis, we must check that we may write the expression $w_{j-1}(\underline{k})\psi_j(w_i(\underline{k}))w_{j+1}(\underline{k})$ under the form $c_0\prod_{i=1}^p c_i^{k_{i}}$ with the $c_i$ in $\frac{1}{r'}G_{v_{j-1}}$ for an $r'\in \N$.
We know that there exists $r$ such that all values of the $w_j$ belongs to $\frac{1}{r}G_{v_j}$.
The map $\psi_j$  sends $\frac{1}{r}G_{v_j}$ to $\frac{1}{rr''}G_{v_{j-1}}$  where $r''$ is the index $[\varphi^i_{e_j}(G_{e_j}),G_{v_{j-1}}]$.

We may then write $w_{i-1}(\underline{k})\varphi_i(w_i(\underline{k}))w_{i+1}(\underline{k})$ as $c_0\prod_{j=1}^p c_j^{k_{j}}$, with $c_i$ in $\frac{1}{rr'}G_{v_{i-1}}$.
We conclude by recurrence.

Since the intersection of two sublattices of $\mathbb Z^p$ is a constructible sublattice, the set of solutions may be given in an explicit manner.\hfill$\qedsymbol$
\end{enumerate} 

\begin{corollary}\label{conjugaisonlocale}
Let $g$ and $g'$ be two elements of $G$ and $v$ a vertex of $\mathcal T$. Then the set of elements of $G_v$ which conjugate $g$ in $g'$ is an affine sublattices of $G_v$ which may be describe explicitly. In particular, for $g=g'$, the intersection of the centralizer of $g$ and $G_v$ is computable.
\end{corollary}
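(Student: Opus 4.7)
The plan is to encode the equation $hgh^{-1}=g'$ as a word equation of the exact shape treated in Lemma~\ref{lemmetechnique}, and then appeal to that lemma.

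Fix a basis $(a_{v,1},\dots,a_{v,r_v})$ of the free abelian group $G_v\simeq \Z^{r_v}$, so that every element $h\in G_v$ is uniquely written as $h=a_{v,1}^{k_1}a_{v,2}^{k_2}\cdots a_{v,r_v}^{k_{r_v}}$ with $(k_1,\dots,k_{r_v})\in\Z^{r_v}$. The equation $hgh^{-1}(g')^{-1}=1$ then becomes
\[
a_{v,1}^{k_1}a_{v,2}^{k_2}\cdots a_{v,r_v}^{k_{r_v}}\cdot g\cdot a_{v,r_v}^{-k_{r_v}}\cdots a_{v,2}^{-k_2}a_{v,1}^{-k_1}\cdot (g')^{-1}=1.
\]

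This is in the normal form of Lemma~\ref{lemmetechnique} with $p=r_v$: take the elliptic letters to be $a_{v,1},\dots,a_{v,r_v},a_{v,r_v}^{-1},\dots,a_{v,1}^{-1}$ (all lying in $G_v$, hence elliptic), choose $\sigma$ so that the $i$-th copy of $a_{v,j}^{\pm 1}$ is raised to $\pm k_j$, insert $g$ between the two blocks and $(g')^{-1}$ at the end, and take all remaining $g_i$ to be trivial. The lemma then produces an algorithmic description of the set
\[
K=\{(k_1,\dots,k_{r_v})\in\Z^{r_v}\mid hgh^{-1}=g'\}
\]
as a finite union of affine sublattices of $\Z^{r_v}$.

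Finally, $K$ viewed inside $G_v$ is either empty or a coset of the subgroup $C_G(g)\cap G_v$, hence already a single affine sublattice; no splitting into several components can genuinely occur, and the finite union returned by Lemma~\ref{lemmetechnique} can be consolidated to one affine sublattice (producing a basis of the translation lattice and one particular solution). For $g=g'$ the identity lies in $K$, so $K=C_G(g)\cap G_v$ is the required subgroup of $G_v$, computable from the description above. No real obstacle arises: all the work has been packaged into Lemma~\ref{lemmetechnique}, and the only point to check is that the conjugation word $hgh^{-1}(g')^{-1}$ fits its hypotheses, which it manifestly does once $h$ is expanded in the chosen basis.
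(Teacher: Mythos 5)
Your proof is correct and follows essentially the same route as the paper: write the candidate conjugator as $\prod_j a_{v,j}^{k_j}$ in a basis of $G_v$, observe that $hgh^{-1}(g')^{-1}=1$ is an equation of the shape handled by Lemma~\ref{lemmetechnique} (using $a_{v,j}^{-1}$ as the elliptic letter to realize the exponent $-k_j$), and note that the solution set is a coset of $C_G(g)\cap G_v$, hence a single affine sublattice. Nothing to add.
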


\begin{proof}
 Let $a_0, \dots, a_p$ be the preferred basis of $G_v$. Then the set of elements conjugating $g$ in $g'$ are the elements of the form $\Pi a_i^{k_i}$ such that $\Pi a_i^{k_i}g\Pi a_i^{-k_i}g'^{-1}=1$. Since the centralizer of $g$ in $G_v$ is a subgroup of $G_v$,  this set is an affine sublattice of $G_v$. We may construct this set applying lemma \ref{lemmetechnique}.
\end{proof}

\section{Conjugacy problem for hyperbolic elements}\label{sectionconjhyp}

 Let $\Gamma$ be a graph of groups of which vertex and edge groups are finitely presented.
 
 If two elements $a$ and $b$ are conjugate by a third element $g$, then $g$ sends the characteristic space of $a$ to the one of $b$. If $a$ and $b$ are hyperbolic their characteristic spaces are lines and are very simple to describe. Knowing these spaces gives us information on $g$.

Fix a preferred presentation for $G$ the fundamental group of $\Gamma$ as described in section \ref{prelim}. Call $\mathcal T$ the Bass-Serre tree of $\Gamma$. We assume that every element of $G$ is given as a word in the preferred generators, and that every vertex is given following the convention \ref{sommet}.

\begin{lemma}\label{debutconj}
Assume that the following problems are solvable:
\begin{itemize}
\item[(P1')] Let $\tilde e$ be an edge of $\mathcal T$ with an endpoint $\tilde v$. Given $g\in G_{\tilde v}$, does $g$ belong to $G_{\tilde e}$ ?
\item[(P2)]  Let $\tilde v$ and $\tilde v'$ be two vertices of $\mathcal T$ and $g\in G$ an elliptic element. Is $G_{\tilde v}\cap (G_{\tilde v'}\cdot g)$ empty ?
\end{itemize}
Given two hyperbolic elements $a$ and $b$ and $n\in \N$, we may decide if there exists $g\in G$ such that the intersection of the axes $\mathcal A_a$ and $\mathcal A_{gbg^{-1}}$ has length at least $n$. 
\end{lemma}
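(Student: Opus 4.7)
The plan is to enumerate finitely many candidate pairs of length-$n$ segments (one in $\mathcal{A}_a$, one in $\mathcal{A}_b$) and, for each, decide whether some $g \in G$ identifies them. First, using Corollary \ref{constructionarbre} (whose hypothesis (P1) is a particular case of (P1')), I construct fundamental domains $F_a \subset \mathcal{A}_a$ and $F_b \subset \mathcal{A}_b$. Any $g$ with $|\mathcal{A}_a \cap \mathcal{A}_{gbg^{-1}}| \geq n$ witnesses a length-$n$ subsegment $[w_0, w_n] \subset \mathcal{A}_a$ and its $g^{-1}$-image $[v_0, v_n] \subset \mathcal{A}_b$; translating by suitable powers of $a$ and $b$ one may assume $w_0$ is a vertex of $F_a$ and $v_0$ a vertex of $F_b$, the orientation of $[v_0, v_n]$ along $\mathcal{A}_b$ being one of two choices. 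The original question is thus equivalent to the disjunction, over at most $2\, l(a)\, l(b)$ quadruples $(w_0, v_0, w_n, v_n)$, of the question: does there exist $g \in G$ with $g v_0 = w_0$ and $g v_n = w_n$? Any such $g$ automatically carries $[v_0, v_n]$ onto $[w_0, w_n]$ since $\mathcal{T}$ is a tree.

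For each quadruple, Proposition \ref{constructionarbre1} builds the geodesics $[v_0, v_n]$ and $[w_0, w_n]$ in $\mathcal{T}$, and via convention \ref{sommet} their $\Gamma$-projections are read off; I discard the quadruple unless they project to the same labelled edge-path, a necessary condition for $g$ to exist. Otherwise I write $v_0 = g_v \tilde{x}$, $w_0 = g_w \tilde{x}$, set $g_0 := g_w g_v^{-1}$ (so $g_0 v_0 = w_0$), and $v_n^{\ast} := g_0^{-1} w_n$. Any $g$ with $g v_0 = w_0$ has the form $g_0 h$ with $h \in G_{v_0}$, so the remaining constraint becomes $h v_n = v_n^{\ast}$. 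As $G_{v_0}$ consists of elliptic elements, the question reduces to whether $v_n^{\ast}$ lies in the $G_{v_0}$-orbit of $v_n$; writing $\{h \in G : h v_n = v_n^{\ast}\}$ as a right coset $G_{v_n^{\ast}} \cdot g'$ for some $g' \in G$ with $g' v_n = v_n^{\ast}$, this amounts to non-emptiness of $G_{v_0} \cap G_{v_n^{\ast}} \cdot g'$, a direct instance of (P2) \emph{provided} $g'$ is elliptic.

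The main obstacle is producing such an elliptic $g'$ (or certifying that no elliptic representative exists---in which case $G_{v_0} \cap G_{v_n^{\ast}} \cdot g' = \emptyset$ since $G_{v_0}$ is elliptic, and the quadruple gives NO). If $v_n = v_n^{\ast}$ take $g' = 1$; otherwise $d(v_0, v_n) = d(v_0, v_n^{\ast}) = n$ forces $d(v_n, v_n^{\ast})$ to be even, so the midpoint $m$ of $[v_n, v_n^{\ast}]$ is a vertex of $\mathcal{T}$. I then walk outward from $m$ along $[m, v_n]$ and $[m, v_n^{\ast}]$ simultaneously, at each step using (P1') together with the transitivity of the current vertex stabilizer on edges of a given $\Gamma$-projection to either extend $g'$ by one more edge or certify incompatibility. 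Looping over the $2\, l(a)\, l(b)$ quadruples and invoking (P2) on each that survives then yields the algorithm.
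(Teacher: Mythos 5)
Your global strategy --- reduce to finitely many quadruples of endpoints, turn each into a coset intersection, and feed it to (P2) --- is sound and close in spirit to the paper's, but the step where you manufacture an \emph{elliptic} coset representative is exactly where the content of the lemma lies, and your sketch of it does not work as written. Write $[m,v_n]=(m=x_0,f_1,x_1,\dots,x_k=v_n)$ and $[m,v_n^\ast]=(m=y_0,f'_1,y_1,\dots,y_k=v_n^\ast)$, and suppose at stage $j$ you hold some $g_j\in G_m$ with $g_j\cdot x_j=y_j$. Transitivity of $G_{y_j}$ on the edges at $y_j$ with a given $\Gamma$-projection produces $h\in G_{y_j}$ with $hg_j\cdot f_{j+1}=f'_{j+1}$, hence $hg_j\cdot x_{j+1}=y_{j+1}$; but $hg_j$ has no reason to fix $m$, nor even to be elliptic. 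What you actually need is an element of $G_m\cap G_{y_j}$, the stabilizer of the whole matched segment, carrying $g_j\cdot f_{j+1}$ to $f'_{j+1}$, and this segment stabilizer is in general a proper subgroup of $G_{y_j}$ that need not act transitively on those edges. So ``the $\Gamma$-projections match at every step'' is only a necessary condition, and your ``extend or certify incompatibility'' dichotomy is not justified by (P1') and transitivity alone --- this is precisely the phenomenon that (P2) exists to handle.

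The repair is to invoke (P2) at every extension step rather than once at the end: an extension to stage $j+1$ exists if and only if $G_m\cap G_{y_{j+1}}\cdot h\neq\emptyset$ (right-multiplication by $g_j^{-1}$ identifies $\{x\in G_m : xg_j\cdot x_{j+1}=y_{j+1}\}$ with $G_m\cap G_{y_{j+1}}h$, and $h$ is elliptic since it lies in $G_{y_j}$), which is a legitimate instance of (P2); a witness can then be found by exhaustive search and the induction continues. Once this is done, your argument essentially collapses onto the paper's proof, which runs the same induction directly on pairs of segments of increasing length anchored in the fundamental domains (no detour through endpoints and a midpoint): at each step it constructs an explicit elliptic $h$ in the stabilizer of the last matched vertex and decides one instance $G_w\cap G_{w''}\cdot h\neq\emptyset$ of (P2).
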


The problem (P1') is equivalent to the problem (P1) expressed in the Bass-Serre tree rather than in the graph of groups : given a vertex $\tilde v=h\cdot v$ of initial vertex $\tilde e=h\cdot e$ and an element $g$, we have $g\in G_{\tilde e}$ if and only if $h^{-1}gh\in G_e$.

\begin{proof}
Applying corollary \ref{constructionarbre}, we may construct fundamental domains $\mathcal D_a$ and $\mathcal D_b$ associated to $a$ and $b$.

A pair of segments $([v,v'],[w,w'])$ is a \textit{good pair} if
\begin{itemize}[topsep=0cm, itemsep=0cm]
\item $v$ belongs to $D_a$, 
\item $[v,v']$ is included in the axis of $a$,
\item $w$ belongs to $D_b$, 
\item $[w,w']$ is included in the axis of $b$,
\item there exists a $g\in G$ sucht that $g\cdot [v, v']=[w,w']$
\end{itemize}

We may notice that there exists $g\in G$ such that $\mathcal A_a\cap \mathcal A_{gbg^{-1}}$ contains a segment $[v,v']$ of length $n$, if and only if there exists two integers $p$ and $q$ such that $(a^p\cdot [v,v'], b^qg\cdot [v,v'])$ is a good pair of segments of length $n$.
We thus have to decide if such a good pair exists.
 
 Note that if $([v,v'],[w,w'])$ is a good pair of segments, then truncating both segments gives a new good pair of segments $([v,v''],[w,w''])$. We then say that $([v,v'],[w,w'])$ \textit{extends} $([v,v''],[w,w''])$. If two good pairs of segments have same length and extend a common good pair of segments, then they are equal. Said differently, if a good pair of segments admits an extension of a fixed length, this extension is unique.

By recurrence, we show that we may construct all good pairs of segments of length $n$. And in particular, we may know if there exists one.

If $n=0$, then a good pair of segments is just a couple of vertices $(v,w)\in \mathcal D_a\times \mathcal D_b$ such that $v$ and $w$ are in the same orbit. Since vertices are given by the product of an element of $G$ and the representative in $\Gamma$ of its orbit, all of these good pairs are obtained directly.

Suppose now that we have constructed all good pairs of segments of length $n$.

For every good pair of segments $([v,v'],[w,w'])$ of length $n$, we first find $g$ such that $g\cdot [v,v']=[w,w']$. We may proceed by an exhaustive search.

Call $v''$ and $w''$ the vertices of $\mathcal A_a$ and $\mathcal A_b$ respectively such that $[v,v'']$ is a segment of length $n+1$ in $\mathcal A_a$ and $[w, w'']$ is a segment of length $n+1$ in $\mathcal A_b$. Call $e$ the edge between $v'$ and $v''$, and $e'$ the edge between $w'$ and $w''$.

The only candidate which may extend $([v,v'],[w,w'])$ is $([v,v''],[w,w''])$. We thus have to decide whether this last pair is a good pair or not.

If $e$ and $e'$ are not in the same orbit, then there is no good pair of segments of length $n+1$ extending $([v,v'],[w,w'])$.

If $e$ and $e'$ are in the same orbit, find $h$ such that $hg\cdot e =e'$. Then $h$ belongs to $G_{w'}$, and is consequently elliptic.
An element $g'$ sends $[v,v'']$ on $[w, w'']$ if and only if $g'g^{-1}$ belongs to $G_w$ and $g'g^{-1}h^{-1}$ belongs to $G_{w''}$. It is sufficient to decide if the intersection $G_w\cap G_{w''}\cdot h$ is empty. This problem is solvable by hypothesis.

We may thus construct all good pairs of segments of length $n+1$.
\end{proof}

Let $\Gamma$ be a graph of group with Bass-Serre tree $\mathcal T$. The tree $\mathcal T$ is said to be $k$-acylindrical if the stabilizer of any segment $[v,v']$ in $\mathcal T$ of length $k$ is trivial, \textit{i.e.} the intersection $G_v\cap G_{v'}$ is trivial. It is said acylindrical if it is $k$-acylindrical for some $k$. Acylindricity is defined by Sela in \cite{Sela97}.

\begin{corollary}
Let $\Gamma$ be a graph of groups of Bass-Serre tree $\mathcal T$ in which (P1') and (P2) are solvable.
If $\mathcal T$ is $k$-acylindrical for a given $k\in\N$ then the conjugacy problem is decidable for hyperbolic elements of $G$.
\end{corollary}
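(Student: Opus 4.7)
The strategy is to combine Lemma \ref{debutconj} with the $k$-acylindricity hypothesis: the lemma provides, for any fixed overlap length, a finite list of candidate conjugators, and acylindricity ensures that a sufficiently long overlap already forces true conjugation. Given hyperbolic $a$ and $b$, first use Corollary \ref{constructionarbre} to compute $l(a)$ and $l(b)$; since translation length is a conjugation invariant, if $l(a)\neq l(b)$ we declare $a$ and $b$ non-conjugate. Otherwise set $l=l(a)=l(b)$ and $N=l+k$.

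Next, apply Lemma \ref{debutconj} with this $N$ (its hypothesis (P1') is equivalent to (P1), as noted in the paper): we obtain a finite list of good pairs of length $N$, and for each pair the exhaustive search at the end of the lemma's proof yields an element $g\in G$ realizing it, i.e.\ a $g$ such that $\mathcal A_a\cap\mathcal A_{gbg^{-1}}$ has length at least $N$. For each such candidate $g$, use the word problem in $G$, which is solvable under (P1) by Lemma \ref{formenormale}, to test whether $gbg^{-1}=a$; declare $a$ and $b$ conjugate with witness $g$ as soon as some candidate passes, and non-conjugate if none does. The easy direction of correctness: if $hbh^{-1}=a$, then $\mathcal A_a=\mathcal A_{hbh^{-1}}$, and after replacing $h$ by $a^ph$ for an appropriate power we obtain a good pair of length $N$ coming from $h$; any two realizations of the same good pair differ by an element fixing a segment of length $N\ge k$, which is trivial by $k$-acylindricity, so the witness produced coincides with $h$ and the word-problem test succeeds.

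The delicate point that justifies the choice $N=l+k$ is the converse: a candidate $g$ only certifies that the overlap $[p,q]=\mathcal A_a\cap\mathcal A_{gbg^{-1}}$ has length at least $N$, without forcing the two axes to coincide. Suppose first that $a$ and $gbg^{-1}$ translate $[p,q]$ in the same direction; for every vertex $x\in[p,q]$ with $d(x,q)\ge l$, both $a\cdot x$ and $gbg^{-1}\cdot x$ equal the vertex at distance $l$ from $x$ toward $q$, so $a^{-1}gbg^{-1}$ fixes a subsegment of length $N-l=k$, and $k$-acylindricity forces $gbg^{-1}=a$. If the two translations go in opposite directions, the same argument yields $gbg^{-1}=a^{-1}$ instead, which simply fails the word-problem test and is discarded. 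The main obstacle, once Lemma \ref{debutconj} and the acylindricity bound are in hand, is precisely this orientation bookkeeping, handled by letting the word-problem verification do the final filtering.
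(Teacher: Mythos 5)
Your proof is correct and follows essentially the same route as the paper: compute and compare translation lengths, apply Lemma \ref{debutconj} with the threshold $n=k+l(a)$, and use $k$-acylindricity to promote a sufficiently long overlap of axes into an actual equality $gbg^{-1}=a$. Your extra word-problem verification of each candidate is a sensible refinement that explicitly handles the orientation issue (the case where the overlap only forces $gbg^{-1}=a^{-1}$), which the paper's one-line conclusion passes over.
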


\begin{proof}
Let $a$ and $b$ be two hyperbolic elements of $G$. We are computable $l(a)$ and $l(b)$ thanks to corollary \ref{constructionarbre}. 
If $l(a)\neq l(b)$ then $a$ and $b$ are not conjugate, else fix $n=k+l(a)$.

Applying lemma \ref{debutconj}, we may decide if there exists $g$ such that $\mathcal A_a$ and $\mathcal A_{gbg^{-1}}$ have an intersection of length $n$. Obviously if no such $g$ exists then $a$ and $b$ are not conjugate.
If such a $g$ exists, then $a^{-1}gbg^{-1}$ fixes a segment of which length is at least $k$ which is thus trivial, and then $a$ and $b$ are conjugate.
\end{proof}

In a \GGBS decomposition, the problems (P1') and (P2) are easily solvable:\begin{itemize}[noitemsep, topsep=0cm]
\item the problem (P1') consists in understanding if an element of $\Z^p$ belongs to a subgroup;
\item the problem (P2) consist in successively computing the intersection of subgroups of $\Z^p$.
\end{itemize}
  However \GGBS decompositions are never acylindrical. To solve the conjugacy problem for hyperbolic elements we need to know how to solve another problem:

\begin{proposition}\label{conj}
 Let $\Gamma$ be a graph of groups for which (P1') and the local conjugacy problem (P3) are solvable: 
 \begin{itemize}
\item[(P3)] Given a vertex $v$  and two hyperbolic elements $a$, $b$, is there $g\in G_v$ such that $gag^{-1}=b$?
\end{itemize}
Then the conjugacy problem for hyperbolic elements is solvable in $G=\pi_1(\Gamma)$.
\end{proposition}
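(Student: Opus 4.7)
The plan is to reduce the hyperbolic conjugacy problem to finitely many instances of (P3), after bringing $a$ and $b$ to a canonical ``cyclically reduced'' form.

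First, using Corollary~\ref{constructionarbre} (applicable since (P1') is the Bass--Serre tree version of (P1)), I compute the translation lengths $l(a)$ and $l(b)$; if they differ, return NO, and otherwise denote the common value $l$. Next, I bring $a$ and $b$ to cyclically reduced form by iteratively reducing the loop form (using (P1') to detect reductions) and, when a reduction is possible at the cyclic junction, conjugating by the first vertex-edge term and reducing again. Each cyclic reduction strictly shortens the loop form, so the procedure terminates with loop forms of length exactly $l$. By Lemma~\ref{baseloop} applied to reduced forms, the base loop of a cyclically reduced element is uniquely determined, and the cyclic class of this base loop, which is the projection of the axis to $\Gamma$, is a conjugacy invariant. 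If the cyclic class of the base loop of $b$ does not match that of $a$, return NO.

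Otherwise, for each of the at most $l$ cyclic rotations matching the two base loops, I conjugate $b$ by an explicit prefix of its loop form to obtain $b'$ sharing the base loop $(v_0, e_1, \ldots, e_l, v_0)$ of $a$; write $a = c_0 t_1 c_1 \ldots t_l c_l$ and $b' = d_0 t_1 d_1 \ldots t_l d_l$. Let $\tilde v_0 \in \tilde A$ be the fixed lift of $v_0$. Cyclic reducedness of $a$ and $b'$ forces both $[\tilde v_0, a \tilde v_0]$ and $[\tilde v_0, b' \tilde v_0]$ to be geodesic segments of length $l$, so $\tilde v_0$ lies in $\mathcal{A}_a \cap \mathcal{A}_{b'}$.

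Any $g \in G$ satisfying $g a g^{-1} = b'$ sends $\mathcal{A}_a$ to $\mathcal{A}_{b'}$, hence $g \tilde v_0 \in \mathcal{A}_{b'}$. Replacing $g$ by $(b')^m g$ (which still conjugates $a$ to $b'$) for a suitable integer $m$, I may assume $g \tilde v_0$ lies in the fundamental domain $[\tilde v_0, b' \tilde v_0]$ of $\mathcal{A}_{b'}$, giving $l$ possible positions $\tilde w_0, \ldots, \tilde w_{l-1}$. For each $\tilde w_k$, I use Proposition~\ref{constructionarbre1} to build an explicit $g_k \in G$ with $g_k \tilde v_0 = \tilde w_k$; then $g = g_k h$ for some $h \in G_{\tilde v_0}$, and the equation $g a g^{-1} = b'$ rewrites as $h a h^{-1} = g_k^{-1} b' g_k$, which (P3) decides (both elements being hyperbolic as conjugates of $a$ and $b$). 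Return YES as soon as one of the at most $l^2$ instances succeeds; otherwise NO.

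The main obstacle is the cyclic reduction step and establishing that the cyclic class of the base loop is a conjugacy invariant recognizable by the procedure. This rests on Lemma~\ref{baseloop} together with a short induction showing that each individual cyclic reduction preserves the cyclic class of the base loop, so that conjugate cyclically reduced elements have base loops differing only by cyclic rotation.
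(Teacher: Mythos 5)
Your proof is correct and its core is the same reduction as the paper's: any conjugator, adjusted on the left by a suitable power of $b$, must send a chosen vertex of $\mathcal A_a$ into a fundamental domain of $b$, leaving finitely many orbit representatives $g_k$ to test via (P3). The only real difference is the setup: the paper produces the axis vertex and the fundamental domain directly from Corollary \ref{constructionarbre} (using (P1')), whereas you first pass to cyclically reduced loop forms and match base loops up to cyclic rotation -- this is sound (and yields extra early rejection tests), but it is more machinery than the argument requires.
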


By corollary \ref{conjugaisonlocale}, the problem (P3) is solvable for \GGBS groups.

\begin{corollary}The conjugacy problem for hyperbolic elements is solvable in \GGBS groups.\hfill$\qedsymbol$
\end{corollary}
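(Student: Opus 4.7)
My plan is to reduce the global conjugacy question between two hyperbolic elements $a$ and $b$ to a finite list of instances of the local conjugacy problem (P3), handling all auxiliary tree manipulations with (P1').

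First, using Corollary \ref{constructionarbre} (which rests only on (P1')), compute $l(a)$ and $l(b)$ together with fundamental domains $\mathcal D_a=[v_0,av_0]$ and $\mathcal D_b=[w_0,w_1,\dots,w_{L-1},bw_0]$. If $l(a)\neq l(b)$, the two elements are not conjugate. Otherwise set $L:=l(a)=l(b)$ and let $v^\Gamma$ denote the image of $v_0$ in $\Gamma$.

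Next I would tabulate
\[
J=\{\,j\in\{0,\dots,L-1\}\ :\ \text{$w_j$ projects to $v^\Gamma$ in }\Gamma\,\}.
\]
If $J=\emptyset$, no element $H\in G$ can satisfy $H\cdot v_0\in\mathcal A_b$ (since $H$ descends to the identity on $\Gamma$) and $a,b$ are not conjugate. Otherwise, for each $j\in J$ I would produce an explicit element $h_j\in G$ with $h_j\cdot v_0=w_j$: in the notation of Convention \ref{sommet}, writing $v_0=g_0\cdot\tilde v$ and $w_j=g_j\cdot\tilde v$, take $h_j:=g_jg_0^{-1}$. Form $c_j:=h_j^{-1}bh_j$ (hyperbolic as a conjugate of $b$) and query (P3) for the triple $(v_0,a,c_j)$, deciding whether there is $g\in G_{v_0}$ with $gag^{-1}=c_j$.

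Correctness is a short bookkeeping argument. If any query returns a positive answer with witness $g$, then $H:=h_j g$ satisfies $HaH^{-1}=h_j g a g^{-1} h_j^{-1}=h_j c_j h_j^{-1}=b$. Conversely, suppose $H\in G$ conjugates $a$ to $b$. Then $H\mathcal A_a=\mathcal A_b$, so $Hv_0$ lies on $\mathcal A_b$ and projects to $v^\Gamma$; consequently $Hv_0=b^m w_j$ for some $m\in\Z$ and some $j\in J$. Replacing $H$ by $b^{-m}H$ (still a conjugator, since $b^{-m}$ commutes with $b$) gives $Hv_0=w_j$, and then $g:=h_j^{-1}H$ lies in $G_{v_0}$ and satisfies $gag^{-1}=h_j^{-1}bh_j=c_j$, so the $j$-th query returns a positive answer.

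The key conceptual point, and the only place where some care is needed, is noticing that every potential conjugator, modulo the obvious $\langle b\rangle$-ambiguity, sends $v_0$ into the finite list of vertices of $\mathcal D_b$ projecting to $v^\Gamma$. This parametrization reduces a potentially infinite search to at most $L$ applications of (P3). Once this reduction is in place, the construction of the $h_j$ and of the $c_j$ is completely mechanical through Convention \ref{sommet} and (P1'), so (P3) is the only non-elementary ingredient required.
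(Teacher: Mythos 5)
Your argument is correct and follows essentially the same route as the paper's proof of Proposition \ref{conj}: reduce to finitely many instances of (P3) indexed by the vertices of a fundamental domain of $b$ lying in the $G$-orbit of a base vertex of $\mathcal A_a$, after normalizing the conjugator by a power of $b$. The only (immaterial) difference is that you pull $b$ back to $v_0$ via $h_j^{-1}$ and query (P3) at $v_0$, whereas the paper pushes $a$ forward via $g_j$ and queries (P3) at the vertices $v_{i_j}$ of $\mathcal D_b$.
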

\begin{proof}[Proof of proposition \ref{conj}]
Let $a$ and $b$ be two hyperbolic elements of $G$. Call $\mathcal T$ the Bass-Serre tree of $\Gamma$.

As (P1') is solvable,  applying corollary \ref{constructionarbre}, we may construct a vertex $v$ which belongs to the axis of $a$ and $\mathcal D_b$ a fundamental domain of $b$. 
Assume that $a$ and $b$ are conjugate by an element $g\in G$. Then for $w$ in the axis of $a$, the vertex $g\cdot w$ belongs to the axis of $b$. Up to multiplying $g$ on the left by a well-chosen power of $b$, we may assume that $g\cdot v$ belongs to $\mathcal D_b$.

Let $D_v = \left\{ v_{i_1},\dots,v_{i_p}\right\}$ be the vertices of $\mathcal D_b$ in the $G$-orbit of $v$. Choose $g_1,\dots, g_p$ some elements of $G$ such that $g_j\cdot v = v_{i_j}$.

Then $a$ and $b$ are conjugate if and only if there exists  $j\in [|1,p|]$ and $h$ in the stabilizer of $v_{i_j}$ such that $hg_jag_j^{-1}h^{-1}=b$. Thus it suffices to apply for each element of $\mathcal D_v$ the local conjugacy problem that we have assumed to be solvable.
%
\end{proof}

\paragraph{Reduction of the problem}~

We now look at the multiple conjugacy problem in \GGBS  groups.

\begin{probleme} \label{multi1}
 Let $G$ be a \GGBS. 
Given two $(n+1)$-tuples $A=(a_0,\dots,a_n)$ and $B=(b_0,\dots,b_n)$, does there exist an element of $G$ which conjugates $A$ in $B$ ?
\end{probleme}

For the same reasons as for the conjugacy problem, the general problem is not solvable. We have to restrict it to the case where the group $G_A$ generated by the elements of  $A$ contains a hyperbolic element. 
\begin{remarque}\label{changementth}
By a lemma of Serre \cite[Proposition 26]{Serre}, either $G_A$ is elliptic or there exist two elements $a_i$ and $a_j$ such that $a_ia_j$ is hyperbolic. Then adding this element to $A$, and at the same time adding the element $b_ib_j$ to $B$, we may assume that $A$ contains a hyperbolic element.
\end{remarque}

Using proposition \ref{conj}, the theorem \ref{thintro} is equivalent to solving the following problem:

\begin{probleme} \label{multi2}
Given two $n$-tuples $A$ and $B$ and a hyperbolic element $a$, does there exist an element of $C_G(a)$ the centralizer of $a$ in $G$ which conjugates $A$ to $B$? 
\end{probleme}

We solve this problem in two steps. In section \ref{centralizer}
 we give an explicit description of 
$C_G(a)$. In section \ref{charac} we determine the position of the characteristic spaces of the $a_i$ and $b_i$ relative to the one of $a$, in the way to decide if they are compatible the one with the others.

\section{Centralizer of hyperbolic elements}\label{centralizer}

If two elements $a$ and $b$ commute, then $b$ preserves the characteristic space of $a$. Unfortunately, it is hard to describe the characteristic space of an elliptic element. It is easy to describe the centralizer of a hyperbolic element using the fact that its characteristic space is a line:

\begin{proposition}\label{centralisateur}
Let  $\Gamma$ be a graph of groups and $G$ its fundamental group. Let $h$ be a hyperbolic element of $G$. Then $C_G(h)$, the centralizer of $h$ in $G$, is a semi-direct product $E\rtimes H$ where $E$ is an elliptic subgroup and $H$ a cyclic subgroup generated by a hyperbolic element.

If $\Gamma$ is a \GGBS decomposition, the centralizer of a hyperbolic element is of the form  $\mathbb{Z}^n\rtimes\mathbb{Z}$.  Moreover in this case, we may explicitly give a generating set.
\end{proposition}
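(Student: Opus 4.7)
The plan is to exploit the fact that any element commuting with $h$ must preserve $\mathcal{A}_h$ setwise and commute with the translation by which $h$ acts on this line. Concretely, I would first observe that for $g\in C_G(h)$ we have $g\cdot\mathcal{A}_h=\mathcal{A}_{ghg^{-1}}=\mathcal{A}_h$, so $g$ acts as an isometry of the line $\mathcal{A}_h$. The full isometry group of a line is $\mathbb{Z}\rtimes\mathbb{Z}/2$, but a reflection anti-commutes with a nontrivial translation, so commuting with the translation by $h$ forces $g$ to act by a translation itself. This yields a homomorphism $\tau\colon C_G(h)\to\mathbb{Z}$ assigning to $g$ its (signed) translation length along $\mathcal{A}_h$.

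Next I would identify the two pieces. The kernel $E:=\ker\tau$ is the set of elements of $C_G(h)$ that fix $\mathcal{A}_h$ pointwise, and in particular fix any vertex of $\mathcal{A}_h$, so $E$ is an elliptic subgroup. The image $\tau(C_G(h))$ is a subgroup of $\mathbb{Z}$ containing $\tau(h)=l(h)\ne 0$, hence is infinite cyclic, generated by some positive integer $d\mid l(h)$. Pick any preimage $h_0\in C_G(h)$ with $\tau(h_0)=d$; then $h_0$ is hyperbolic (since $d>0$) and $H:=\langle h_0\rangle$ maps isomorphically onto $\tau(C_G(h))$. Because $\mathbb{Z}$ is free, the short exact sequence $1\to E\to C_G(h)\to H\to 1$ splits via $H$, proving $C_G(h)=E\rtimes H$ as claimed in the general case.

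In the \vGBS case I would then argue that $E$ is free abelian of finite rank. Fix a fundamental domain $[v_0,v_{l(h)}]$ of $h$ on $\mathcal{A}_h$ with vertices $v_0,v_1,\ldots,v_{l(h)-1}$. Any $e\in E$ fixes every vertex of $\mathcal{A}_h$, and since $e$ commutes with $h$ the condition ``$e$ fixes $v_i$ for all $i\in\mathbb{Z}$'' reduces to ``$e$ fixes $v_0,\dots,v_{l(h)-1}$''. Hence $E=\bigcap_{i=0}^{l(h)-1}G_{v_i}$, a finite intersection of finitely generated free abelian groups, which is itself free abelian of some finite rank $n$. Combined with $H\cong\mathbb{Z}$ this gives $C_G(h)\cong\mathbb{Z}^n\rtimes\mathbb{Z}$.

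For the explicit generating set I would proceed as follows. Corollary \ref{constructionarbre} produces a fundamental domain $v_0,\dots,v_{l(h)}=h\cdot v_0$ on $\mathcal{A}_h$ along with the vertex stabilizers as explicit sublattices of the ambient $\mathbb{Z}^{r_{v_i}}$'s; intersecting these sublattices successively (a routine linear-algebra task over $\mathbb{Z}$, using the embeddings $\varphi^{i}_e,\varphi^{t}_e$ to express each $G_{v_i}$ in a common ambient group via the edges of $\mathcal{A}_h$) produces an explicit basis of $E$. To produce $h_0$, enumerate the divisors $d$ of $l(h)$ in increasing order; for each such $d$ test whether the vertex $w$ at distance $d$ from $v_0$ along $\mathcal{A}_h$ admits an element $g\in G$ with $g\cdot v_0=w$ and $gh=hg$, via corollary \ref{conjugaisonlocale} applied to the conjugation equation inside the vertex group of $v_0$ after reducing modulo a suitable power of $h$. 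The smallest successful $d$ yields $h_0$, and $\{$basis of $E\}\cup\{h_0\}$ is the required generating set. The main subtlety I expect is the last step: identifying the primitive translation $h_0$ effectively. The bound $d\mid l(h)$ makes the search finite, and corollaries \ref{constructionarbre} and \ref{conjugaisonlocale} give effective versions of each test, so the difficulty is organizational rather than conceptual.
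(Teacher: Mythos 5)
Your first paragraph and the splitting argument are essentially the paper's proof of the general statement, and your procedure for locating the primitive translation $h_0$ (searching among vertices $w$ on the axis at distance $d$ from $v_0$ and solving $ghg^{-1}=g_d^{-1}hg_d$ for $g$ in $G_{v_0}$) is the paper's Step~2, modulo the harmless optimization of restricting to divisors of $l(h)$. The problem is your computation of $E$. You claim $E=\bigcap_{i=0}^{l(h)-1}G_{v_i}$, but your own argument only shows that an element of $E$ fixes $v_0,\dots,v_{l(h)-1}$, i.e.\ $E\subseteq\bigcap_{i=0}^{l(h)-1}G_{v_i}$; the reverse inclusion is false because an element fixing those vertices (or even fixing the entire axis pointwise) need not commute with $h$. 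Concretely, in $BS(1,2)=\langle a,t\mid tat^{-1}=a^2\rangle$ with $h=t$ one has $l(t)=1$, so your formula gives $E=G_{v_0}=\langle a\rangle\cong\Z$, whereas in fact $C_G(t)=\langle t\rangle$ and $E=\{1\}$. Even the full intersection $\bigcap_{i\in\Z}G_{v_i}$ is too big: in $\langle a,t\mid ta^2t^{-1}=a^{-2}\rangle$ the element $a^2$ lies in every vertex stabilizer along the axis of $t$ but does not commute with $t$. So the group you compute, and hence the basis you extract from it, is in general strictly larger than $E$.

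The correct identification, which is what the paper uses, is $E=C_G(h)\cap G_{v_0}$: an element of the centralizer fixing one vertex of the axis automatically fixes the whole axis, but the commutation condition must be kept. This still gives $E\le G_{v_0}\cong\Z^{r_{v_0}}$, so the qualitative conclusion $C_G(h)\cong\Z^n\rtimes\Z$ survives; for effectiveness one computes $E$ by solving the equation $geg^{-1}h\cdots=1$ inside $G_{v_0}$, i.e.\ by applying corollary \ref{conjugaisonlocale} with $g=g'=h$ (equivalently lemma \ref{lemmetechnique}), exactly the tool you already invoke for the $h_0$ step but omit here. With that substitution your argument goes through and coincides with the paper's.
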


\proof
By definition, the group  $C_G(h)$ commutes with $h$, and thus acts on its axis by translation  in the Bass-Serre tree. This action defines a map from $C_G(h)$ to $\mathbb Z$ with image $p\Z$ for $p$ the minimal non-zero translation length in $C_G(h)$. The kernel $E$ of this map fixes the axis pointwise, and then is included in an edge group. We can take for $H$ any section of $p\Z$.

If $G$ is a  \GGBS group, we proceed in two steps  to explicitly determine $C_G(h)$.
\begin{itemize}[leftmargin=0cm]
 \item Step 1: Computation of $E$.

Let $v$ be a vertex of the axis of $h$. The group $E$ fixes the axis, in particular it is included in $G_v$. This group is the set of elements in $G_v$ which commute with $h$. Corollary  \ref{conjugaisonlocale} explicitly gives the sublattice of $G_v$ consisting of elements which commute with $h$.

\item Step 2: Compute a generator for $H$.

From the first part of the lemma, it suffices to find a hyperbolic element $h'$ of $C_G(h)$ with minimal translation length. Let $\mathcal D = (v_0,\dots,v_d)$ be a fundamental domain of $h$. Then the desired element $h'$ satisfy $h'\cdot v_0=v_k$ for some $k\leq d$.

We determine $K\subset [|1,d|]$ the set of elements such that $v_k$ and $v_0$ are in the same $G$-orbit. Then for all $k\in K$ we compute $g_k$ such that $g_k\cdot v_0=v_k$.

Now, we search $h'$ of the form $h'=g_kg$ for a $k\in K$ and a $g$ in $G_{v_0}$. By lemma \ref{lemmetechnique}, for every $k\in K$ we may compute the set $G_k$ of elements $g$ in $G_{v_0}$ such that $[g_kg,h]=1$. Note that $G_d$ is not empty since $hg^{-1}_d$ belongs to $G_d$. Let $k_0$ be the least integer such that $G_{k_0}\neq \emptyset$.  We may take for $h'$ any element in $g_{k_0}G_{k_0}$.~\hfill$\qedsymbol$
\end{itemize}

\section{\texorpdfstring{Modulus of a hyperbolic element in a \GGBS group}{}}\label{modulus}

Let $M$ be a finitely generated free abelian group  seen as a $\Z$-module. Let $V$ and $W$ be two subgroups of $M$ and $\varphi : V \rightarrow W$ an isomorphism of groups.

Denote $\varphi_{\otimes\mathbb Q} : V\otimes\mathbb Q \rightarrow W\otimes\mathbb Q$ the extension of $\varphi$, where $V\otimes\mathbb Q$ and  $W\otimes\mathbb Q$ are seen as subspaces of $M\otimes \mathbb Q$. Call $D_{\varphi}$ the set of elements $x \in (V\cap W)\otimes \mathbb Q$ verifying $$\exists (x_n)_{n\geq 0} \in((V\cap W)\otimes \mathbb Q)^{\mathbb N} \text{ such that } \varphi_{\otimes\mathbb Q}(x_{n+1})=x_n \text{ and } x=x_0$$ the $\Q$-subspace of $M\otimes \Q$ composed of elements $x$ of $V\otimes \mathbb Q$ such that $\varphi^{-n}_{\otimes \mathbb Q}(x) $ is defined for all $n\in \N$. We define the $\Q$-linear map $\tilde \varphi = (\varphi_{\otimes \mathbb Q})_{ | D_{\varphi}}$.


The minimal polynomial of a linear map $\varphi$ will be for us the monic polynomial $P$ of least degree such that $P(\varphi)=0$.

\begin{proposition} \label{alg} With the same notations as before, 
 \begin{enumerate}[leftmargin=0cm]

  \item \label{point1} the map $\tilde \varphi$ is an automorphism of $D_{\varphi}$.  
  We may see $D_{\varphi}$ as the set of elements $x$ of $M\otimes\Q$ such that $\varphi_{\otimes\Q}^{n}(x) $ is defined for all $n\in \Z$ (or for all $n\in \N$).
\item \label{point2}For all $k\neq 0$ we have $D_{\varphi^k}=D_{\varphi}$ and $\widetilde{(\varphi^k)} = (\tilde \varphi)^k$.
  \item Let $\varphi$ and $\varphi'$ be two homomorphisms respectively of sources $V\subset M$ and $V'\subset M$. If $V$ and $V'$ are commensurable and $\varphi_{|V\cap V'}=\varphi'_{|V\cap V'}$, then $D_\varphi= D_{\varphi'}$ and $\tilde \varphi = \tilde \varphi'$,

  \item \label{alg3} for $x\in M$, we have $x \in D_\varphi$ if and only if $\forall k \in \mathbb Z~\exists p \in \mathbb N$ such that $\varphi^k(x^p)$ is defined.

  \item \label{droite} Let $x$ be in $M$. There exists $p \in \mathbb N$ such that  $\varphi^k(x^p)$ is defined for all $k\in \mathbb N$ if and only if  there exists a subspace $D_x \subset D_\varphi$ containing $x$ and $\tilde \varphi$-invariant, such that the  minimal polynomial of $\tilde \varphi_{|D_x}$ belongs to $\Z[X]$.

  \item Let $x$ be in $M$. There exists $p \in \mathbb N$ such that $\varphi^{-k}(x^p)$ is defined for all $k\in \mathbb N$ if and only if  there exists a subspace $D_x \subset D_\varphi$ containing $x$ and $\tilde \varphi$-invariant, such that the  minimal polynomial of $\tilde \varphi^{-1}_{|D_x}$ belongs to $\Z[X]$.
 \end{enumerate}
\end{proposition}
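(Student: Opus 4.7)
My plan is to reduce everything to finite-dimensional linear algebra over $\Q$. Set $U=V\otimes\Q$, $W'=W\otimes\Q$, and $A=(V\cap W)\otimes\Q$; clearing denominators gives $A=U\cap W'$. The central observation, to be established in part~(\ref{point1}), is that $D_\varphi$ equals the \emph{largest} $\Q$-subspace of $A$ on which $\varphi_{\otimes\Q}$ restricts to an automorphism; once this maximality characterization is secured, the remaining parts follow comparatively cleanly.

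For part~(\ref{point1}) I would introduce the descending chain $A_0=A$, $A_{n+1}=A_n\cap\varphi_{\otimes\Q}(A_n)$ inside the finite-dimensional space $A$; it stabilizes at some $A_\infty$. Unwinding the definition shows $A_\infty=\{x\in A:\psi^i(x)\in A\text{ for all }i\geq 0\}$, where $\psi=\varphi_{\otimes\Q}^{-1}$, which is exactly $D_\varphi$. By construction $\psi$ maps $A_\infty$ into itself; since $\psi$ is injective on $W'$, its restriction is an injective endomorphism of the finite-dimensional $A_\infty$, hence an automorphism. Its inverse is $\tilde\varphi$, which is therefore an automorphism of $D_\varphi$. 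The forward-iterate characterization then follows: one direction is immediate from $\tilde\varphi(D_\varphi)=D_\varphi\subset A$; conversely, if $\varphi^n(x)$ is defined for all $n\geq 0$, then $\varphi^i(x)\in A$ for $i\geq 1$, and the $\Q$-span of $\{\varphi^i(x):i\geq 1\}$ is a finite-dimensional $\varphi$-invariant subspace of $A$ on which $\varphi$ is an injective (hence bijective) endomorphism, so it lies in $D_\varphi$; applying $\psi$ shows this span also contains $x$.

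Parts~(\ref{point2})--(\ref{alg3}) then follow from the maximality characterization. For~(\ref{point2}), given a subspace $D$ on which $\varphi^k_{\otimes\Q}$ is an automorphism, the sum $D+\varphi(D)+\cdots+\varphi^{k-1}(D)$ is $\varphi$-invariant and contained in $A$, hence in $D_\varphi$; the reverse inclusion follows because $\tilde\varphi^k$ is automatically an automorphism of $D_\varphi$, and $\widetilde{\varphi^k}=\tilde\varphi^k$ on the common domain. For part~3, commensurability of $V$ and $V'$ forces $V\otimes\Q=V'\otimes\Q=(V\cap V')\otimes\Q$; the hypothesized agreement on $V\cap V'$ then gives $\varphi_{\otimes\Q}=\varphi'_{\otimes\Q}$, so all constructions coincide. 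For part~(\ref{alg3}), if $x\in M$ then ``$\exists p\in\N$ such that $\varphi^k(x^p)$ is defined in $M$'' is equivalent to ``$\varphi^k(x)$ is defined in $M\otimes\Q$'' (the $M$-domain of $\varphi^k$ rationalizes to the $\Q$-domain), which by part~(\ref{point1}) is equivalent to $x\in D_\varphi$.

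The real work is part~(\ref{droite}). For ``if'', write the minimal polynomial as $X^d+a_{d-1}X^{d-1}+\cdots+a_0\in\Z[X]$; induction on $k$ shows that $\tilde\varphi^k(x)$ is a $\Z$-linear combination of $x,\tilde\varphi(x),\ldots,\tilde\varphi^{d-1}(x)$ for every $k\geq 0$. Choosing $p\in\N$ large enough that $p\tilde\varphi^i(x)\in V$ for $i=0,\ldots,d-1$ (possible since these finitely many vectors lie in $U=V\otimes\Q$) then yields $p\tilde\varphi^k(x)\in V$ for every $k\geq 0$, so $\varphi^k(x^p)$ is defined in $M$ for all $k\geq 0$. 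For ``only if'', setting $y=x^p$, form $N=\sum_{k\geq 0}\Z\varphi^k(y)\subset M$; as a subgroup of the finitely generated free abelian group $M$, $N$ is free of finite rank and $\varphi$-invariant. In a $\Z$-basis of $N$ the restriction $\varphi|_N$ is an integer matrix; its characteristic polynomial lies in $\Z[X]$, and by Gauss's lemma its monic factor, the minimal polynomial, also lies in $\Z[X]$. Taking $D_x=N\otimes\Q$, the injectivity of $\varphi|_N$ makes $\tilde\varphi|_{D_x}$ an automorphism (forcing $D_x\subset D_\varphi$ by maximality), $x\in D_x$ since $y=x^p\in N$, and the minimal polynomial condition is satisfied. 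Part~6 is then obtained by applying~(\ref{droite}) to $\varphi^{-1}\colon W\to V$, using $D_{\varphi^{-1}}=D_\varphi$ and $\widetilde{\varphi^{-1}}=\tilde\varphi^{-1}$ from part~(\ref{point2}). The main technical obstacle is the stabilization argument in part~(\ref{point1})---reconciling the backward-sequence definition with the forward-iterate characterization---which the descending-chain argument handles cleanly.
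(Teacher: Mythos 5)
Your proof is correct and follows essentially the same route as the paper: finite-dimensionality forces the backward iterates of $\varphi_{\otimes\Q}$ to stabilize on a subspace where the map is an automorphism (your descending chain is the same construction the paper uses to compute $D_h$), the domain questions over $M$ reduce to questions over $M\otimes\Q$ by a torsion/denominator-clearing argument, and the ``only if'' of point~5 uses the very same $\Z$-module $\langle\varphi^k(x^p)\rangle$ generated by the forward orbit. The only cosmetic difference is in the ``if'' direction of point~5, where you iterate the integral minimal polynomial directly on the orbit of $x$ instead of passing to the basis of invariant factors; both are routine and valid.
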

\proof 
\begin{enumerate}[itemindent=1cm, leftmargin=0cm]
\item By construction, if $x\in D_\varphi$ then there exists a sequence $(x_n)_{n\in \N}$ such that $\varphi_{\otimes Q}(x_{n+1})= x_n$ and $x_0=x$. In particular $x_1 \in D_\varphi$.  Thus $\varphi^{-1}_{\otimes \Q}$ is an isomorphism from $W\otimes \Q$ to $V\otimes Q$ which send $D_\varphi$ into $D_\varphi $ with $D_\varphi$ of finite dimension. Hence the linear map $\tilde \varphi$ is an isomorphism.
\end{enumerate}
\begin{enumerate}[resume,leftmargin=0cm,topsep=0cm]
\item From point \ref{point1}, we have $D_\varphi=D_{\varphi^{-1}}$. We thus prove point \ref{point2} only for $k\in \N$. 
For all $k\in \N$ the sequence $(y_n)=(x_{kn})$ is such that $\varphi^k(y_{n+1})=y_n$ and $y_0=x$. Hence  $D_\varphi \subset D_{\varphi^k}$.

Conversely, if there exists a sequence  $(y_n)_{n\in\N}$ such that $\varphi^k(y_{n+1})=y_n$ and $y_0=x$, we may extend the sequence in a sequence $(x_n)$  define by $x_0=x$ and 
$x_{(kp-q)}=\varphi^q(y_p)$ with $p\in \N$ and $q\in[|0,k-1|]$. As for all $p\in \N$, the element $\varphi^k(y_p)$ is defined, the elements $\varphi^q(y_p)$ for $q\in[|0,k-1|]$ are also defined. The sequence $(x_n)$ verifies $\varphi(x_{n+1})=x_n$ and $x_0=x$. In particular $D_\varphi = D_{\varphi^k}$.

\item As $V$ and $V'$ are commensurable, $V\otimes\Q = V'\otimes\Q = (V\cap V') \otimes \Q$. In particular $\varphi_{\otimes \Q}=(\varphi_{|V\cap V'})_{\otimes\Q}=(\varphi'_{|V\cap V'})_{\otimes\Q}=\varphi'_{\otimes \Q}$. Hence $D_{\varphi}=D_{\varphi'}$ and $\tilde \varphi=\widetilde {(\varphi')}$.

\item We proved that the $\Q$-vector space $D_{\varphi^k}$ does not depend on $k$, however the definition set $V_k$ of $\varphi^k$  (which is a subgroup of $D_{\varphi^k}$) depends on $k$. Note that $V_1=V$, $V_{-1}=W$ and for all $k\geq 0$ we have $V_{k+1}\subset V_k$ and $V_{-k-1}\subset V_{-k}$.

As $D_{\varphi^k}/(V_k\cap D_{\varphi^k})$ is a torsion group, if $y \in D_{\varphi^k}$ then there exists $p$ such that $y^p \in V_k$, then $\varphi^k(y^p)$ is defined. If $x\in D_\varphi$, then for all $k\in \mathbb Z$, the element $x$ is in $D_{\varphi^k}=D_\varphi$. Thus, for all $k$ there exists $p$ such that $\varphi^k(x^p)$ is defined.

Conversely, assume that for all $k$ there exists an integer $p_k$ such that $x^{p_k}$ belongs to $V_k$. Call $x_k=\varphi^{k}(x^{p_{k}})$. and define the sequence $\tilde x_k=x_{k}^{\frac{1}{p_{k}}}$ of element in $V\otimes \Q$. We have $\varphi_{\otimes \mathbb Q}(x_{k})=x_{k+1}$ and $x_0=x$. Thus by definition $x$ belongs to $D_\varphi$.

\item Call  $\widehat D_x = \langle \varphi^k(x^p), k \in \mathbb N \rangle \subset  M$ the $\Z$-module generated by the iterated images of $x^p$. By construction $\varphi$ is an endomorphism of $\widehat D_x$. In particular the minimal polynomial of $\varphi_{|\widehat D_x}$ has its coefficients in $\Z$. Moreover as $\varphi(\widehat D_x)\subset \widehat D_x$, we have $\widehat D_x\subset D_{\varphi}$. Hence $D_x:=\widehat D_x\otimes \Q$ is a $\tilde \varphi$-invariant subspace of $D_\varphi$. The minimal polynomial of $\tilde \varphi_{|\widehat D_x}$ which is equal to the minimal polynomial of $\varphi_{|\widehat D_x}$ has its coefficients in $\Z$. 

Conversely, suppose there exists a $\tilde \varphi$-invariant subspace $D_x$ of  $D_\varphi$ containing $x$ and such that the minimal polynomial of $\tilde \varphi_{|D_x}$ has its coefficients in $\Z$. The matrix of the restriction of $\tilde \varphi$ to $D_x$ has integer entries in the basis associated to its invariant factors. On the one hand, there exists $p$ such that all $p$th powers of the elements of this basis belongs to $V$, on the other hand there exists $m$ such that $x^m$ has integer coordinates in this basis. Then $p=mq$ is as required.
\item It suffices to apply the previous point to $\varphi^{-1}$.\hfill$\qedsymbol$
\end{enumerate}
\paragraph{Modulus of hyperbolic elements}Let $G$ be a \GGBS  group, let $h$ be a hyperbolic element. Fix $w_0$ a vertex of the axis of $h$. Define $M=G_{w_0}$, $V = G_{w_0} \cap h^{-1} G_{w_0}h= G_{w_0} \cap G_{h^{-1}\cdot w_0}$ and $W = G_{w_0} \cap h G_{w_0}h^{-1}=G_{w_0} \cap G_{h\cdot w_0}$. Then $t_h$ the conjugation by $h$ defines an isomorphism between $V$ and $W$. 
We define $\varphi_h = \tilde t_h$ the \textit{modulus} of $h$ (relative to $w_0$). It is an automorphism of $D_{t_h} \subset G_{w_0}\otimes \Q$. We take the notation $D_h$ instead of $D_{t_h}$.

\begin{remarque}\label{remmod}
By point $4$ of proposition \ref{alg}, an element $g\in G_{w_0}$ belongs to $D_h$ if and only if for every $n\in \Z$ there exists an integer $p$ such that $t^n_h(g^p)\in G_{w_0}$, that is, 
for every $n\in\Z$ there exists an integer $p$ such that $g^p$ belongs to $G_{h^{-n}\cdot w_0}$.
\end{remarque}

\begin{lemma}\label{propmodule}
\begin{enumerate}  [itemindent=-0.4cm]
\item The modulus $\varphi$ is an element of $Gl(D_h)$ and for all $k\in \mathbb Z^*$ the modulus of $h^k$ is $(D_h,\varphi^k_h)$.
\item Let  $(D'_h,\varphi'_h)$ the modulus of $h$ constructed from an other vertex $w'_0$ of the axis. Then $\mathcal D = D_h\cap G_{w_0}$ and $\mathcal D' = D'_h\cap G_{w'_0}$ are commensurable and $\varphi_{h|\mathcal D\cap \mathcal D'}=\varphi'_{h|\mathcal D\cap \mathcal D'}$. In particular, we may identify canonically $(D_h,\varphi_h)$ and  $(D'_h,\varphi'_h)$.
\item The set $D_h$ and  the map $\varphi_h$ are explicitly constructible.
\end{enumerate}
\end{lemma}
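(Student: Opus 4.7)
The argument naturally splits into three items corresponding to the three assertions.

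For item 1, the statement is essentially a direct translation of Proposition \ref{alg}. Part 1 of that proposition applied with $\varphi = t_h$ gives that $\varphi_h = \tilde t_h$ is an automorphism of $D_{t_h} = D_h$. Part 2 gives $D_{t_h^k} = D_{t_h}$ and $\widetilde{(t_h^k)} = \tilde t_h^k$ for all $k\neq 0$; since the modulus of $h^k$ is $(D_{t_{h^k}}, \tilde t_{h^k}) = (D_{t_h^k}, (\tilde t_h)^k)$, this equals $(D_h, \varphi_h^k)$ as required.

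For item 2, I would argue commensurability first, then agreement of $\varphi_h$ and $\varphi'_h$ on the intersection. Commensurability relies on the characterization from Remark \ref{remmod}: given $g\in \mathcal D$, choose $n$ large enough that $w'_0$ lies on the axis segment between $h^{-n}w_0$ and $h^{n}w_0$. By the remark there exist $p_1,p_2$ such that $g^{p_1}\in G_{h^{-n}w_0}$ and $g^{p_2}\in G_{h^n w_0}$, so $g^{p_1 p_2}$ fixes both endpoints, and hence every vertex of the axis between them (the fixed set of an elliptic element being a subtree); in particular $g^{p_1 p_2}\in G_{w'_0}$. Applying the same argument to every axis vertex $h^{-m}w'_0$ shows that $g^{p_1 p_2}$ also lies in $\mathcal D'$. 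Thus every element of $\mathcal D$ has a power in $\mathcal D\cap \mathcal D'$; since $\mathcal D$ is finitely generated free abelian, the quotient $\mathcal D/(\mathcal D\cap \mathcal D')$ is torsion and hence finite. The symmetric argument yields $[\mathcal D' : \mathcal D\cap\mathcal D'] < \infty$, giving commensurability. For agreement, on $\mathcal D\cap \mathcal D'$ both $\varphi_h$ and $\varphi'_h$ are, after $\Q$-linear extension, induced by conjugation by $h$ in $G$; they therefore coincide on $(\mathcal D\cap \mathcal D')\otimes \Q$, which by commensurability equals both $D_h$ and $D'_h$, giving the canonical identification.

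For item 3, the strategy is to realize $D_h$ as a stabilizing intersection. Define the decreasing sequence $V^{(n)} := G_{w_0}\cap G_{h^{-1}w_0}\cap\cdots\cap G_{h^{-n}w_0}$, which is algorithmically computable as an intersection of subgroups of the finitely generated free abelian group $G_{w_0}$ using (P1) together with Proposition \ref{constructionarbre1}. Since the ranks of $V^{(n)}$ are non-increasing and bounded below, they stabilize at some computable step $N$. Once stabilized, $U := V^{(N)}\otimes \Q$ satisfies $\tilde t_h(U)\subset U$, so by finite-dimensionality $\tilde t_h|_U$ is automatically an automorphism and every iteration (forward and backward) is defined on $U$, yielding $U = D_h$. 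Extracting a $\Z$-basis of $V^{(N)}$ gives an explicit basis of $D_h$, and computing conjugation by $h$ on this basis yields the matrix of $\varphi_h$ with rational entries.

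The main obstacle is the commensurability argument in item 2, which rests on the geometric observation that some single power of any $g\in\mathcal D$ fixes every specified vertex of the axis; items 1 and 3 then reduce to mechanical applications of Proposition \ref{alg} and the stabilization of a decreasing chain of subgroups of a finitely generated free abelian group.
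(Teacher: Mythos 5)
Your proof is correct and takes essentially the same route as the paper: items 1 and 2 reduce to Proposition \ref{alg} (with the commensurability of $\mathcal D$ and $\mathcal D'$ extracted from Remark \ref{remmod} via convexity of fixed-point sets along the axis, a step the paper leaves implicit), and item 3 computes $D_h$ as the stabilized term of a decreasing chain of stabilizers of axis vertices, which is the paper's sequence $V_0=V$, $V_i=t_h(V_{i-1})\cap V_{i-1}$ in an equivalent reindexed form.
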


\begin{proof}
The point $1$  is the transcription in $G$ of point $1$ of the proposition  \ref{alg}.

By the remark \ref{remmod}, the intersections $D_h\cap G_{w_0}$ and $D'_h\cap G_{w'_0}$ are commensurable. Thus the point $2$ is a consequence of point $2$ of proposition \ref{alg}.

We now construct $D_h$ and  $\varphi_h$. 
We may assume that the group is given by a preferred presentation (cf section \ref{presad}).
First, with corollary \ref{constructionarbre}, we may construct a fundamental domain $[w_0,h\cdot w_0]$ of $h$ and its projection to $\Gamma$.  Note that the preferred presentation gives us a basis of the image of the edge groups in the vertex groups. As $V$ and $W$ may be seen as intersections of edge groups, we may construct two bases $\mathcal B$ and $\mathcal B'$ of $V$ and $W$. We may also compute a matrix representing $t_h:V\rightarrow W$ in the bases $\mathcal B$ and $\mathcal B'$. Moreover, we may iteratively construct the sequence $V_i$ of subgroups of $G_{w_0}$ defined by $V_0=V$ and $V_i=t_h(V_{i-1})\cap V_{i-1}$. The sequence $V_i\otimes \Q$ is first strictly decreasing (for the inclusion) and then stationary. The set $D_h$ is thus equal to the first term $V_i\otimes \Q$ such that $V_i\otimes \Q=V_{i+1}\otimes \Q$. Then $\varphi_h=t_{h|D_h}\otimes \Q$.
\end{proof}

\section{Characteristic spaces}\label{charac}

 Given $w_0$ a vertex of the axis of $h$, we define $\widehat V_h = G_{w_0} \cap D_h$, which does not depend on the choice of $w_0$ up to commensurability. Let $g$ be an elliptic element of $G$. If there exists $n\in \N$ such that $g^n\in\widehat V_h$, then we note $g\in[D_h]$. 
By proposition \ref{alg} we have $g\in [D_h]$  if and only if $\forall k \in \mathbb Z,\exists p \in\mathbb N,~g^p\in G_{h^k\cdot w_0}$.

\begin{proposition} \label{geom}
Let $G$ be a \GGBS  group. Let $h$ be a hyperbolic element with axis $\mathcal A$ and let $g$ be an elliptic element. For every $p$, denote $\mathcal E_p$ the set of fixed points of $g^p$. 

There exists $p$ such that $\mathcal E_p\cap\mathcal A$ is a $h$-positive half-line if and only if $g \in [D_h]$ and there exists a $\varphi_{h}$-stable subspace $D_g$ of  $D_h$ such that $g$ has a power in $D_g\cap G_{w_0}$ and such that the minimal polynomial of $\varphi_{h|D_g}$ is in  $\mathbb Z[X]$.

\end{proposition}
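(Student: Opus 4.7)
The proposition is the geometric translation of points~4 and~5 of Proposition~\ref{alg}. My plan is to match the condition "$\mathcal E_p\cap\mathcal A$ is an $h$-positive half-line" with the iterability condition "there exists $p$ such that every $\varphi_h^k(g^p)$, $k\in\mathbb N$, is defined in $G_{w_0}$", and then to invoke the algebraic dictionary already developed in Section~\ref{modulus}.

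The setup is to fix $w_0$ on $\mathcal A$. By Lemma~\ref{propmodule}(2), sliding $w_0$ along the axis by powers of $h$ does not change $(D_h,\varphi_h)$ up to canonical identification, so I may assume that the half-line under consideration has origin $w_0$: its vertices are then the successive translates of $w_0$ in the $h$-positive direction. An elliptic $g^p$ fixes this half-line pointwise precisely when $g^p$ lies in each of the corresponding vertex stabilizers, which by Remark~\ref{remmod} is exactly the condition that every iterate $\varphi_h^k(g^p)$, $k\ge 0$, is defined in $G_{w_0}$.

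For $(\Rightarrow)$: a half-line of fixed points of $g^p$ produces, through the above dictionary, a power $g^p\in G_{w_0}$ whose iterates under $\varphi_h$ are all defined. Point~4 of Proposition~\ref{alg} then places $g^p\in D_h$, whence $g\in[D_h]$. Point~5 of the same proposition, applied to $x=g^p$, supplies a $\tilde\varphi_h$-stable subspace $D_g\subset D_h$ containing $g^p$ with minimal polynomial of $\varphi_{h|D_g}$ in $\mathbb Z[X]$; since $g^p\in D_g\cap G_{w_0}$, all the required properties hold.

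For $(\Leftarrow)$: given such a $D_g$ with a power $g^m\in D_g\cap G_{w_0}$, point~5 of Proposition~\ref{alg}, read now from the algebraic side to the iterate side, produces an integer $q$ with $\varphi_h^k(g^{mq})$ defined in $G_{w_0}$ for every $k\ge 0$. Translating back, $g^{mq}$ fixes every vertex of the $h$-positive half-line of $\mathcal A$ based at $w_0$, so $\mathcal E_{mq}\cap\mathcal A$ contains this half-line. The delicacy I expect to require the most care is showing that $\mathcal E_{mq}\cap\mathcal A$ \emph{is} this half-line rather than the full axis $\mathcal A$ — equivalently, that the iterates of $g^{mq}$ in the opposite direction are not also all defined in $G_{w_0}$. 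If they were, $g^{mq}$ would fix $\mathcal A$ pointwise and hence lie in the subgroup $E$ of the decomposition $C_G(h)=E\rtimes H$ of Proposition~\ref{centralisateur}; ruling this out (or else treating it as an admissible degenerate case) is the main obstacle I anticipate.
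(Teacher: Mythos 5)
Your proposal is correct and is essentially the paper's own proof, which consists precisely of translating ``$\mathcal E_p\cap\mathcal A$ contains an $h$-positive half-line'' into the statement that all iterates of $t_h$ on a suitable power of $g$ are defined in $G_{w_0}$, and then citing point \ref{droite} of proposition \ref{alg}. The ``obstacle'' you flag at the end is not one: the paper reads the condition as \emph{containing} a positive half-line (its opening equivalence ``there exists $k_0$ such that for all $k\geq k_0$, $g^p\in G_{h^k\cdot w_0}$'' includes the whole-axis case), and the half-line versus full-axis distinction is resolved only later, in proposition \ref{positions}, by running the test separately for $h$ and $h^{-1}$.
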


\begin{proof}
By definition $\mathcal E_p\cap\mathcal A$ is a $h$-positive half-line if and only if there exists $k_0$ such that for all $k\geq k_0$,  the element $g^p$ belongs to the group $G_{h^k\cdot w_0}$, that is, $\forall k\in\N$ we have $h^{-k}g^ph^k \in G_{w_0}$. 
Thus, the proposition results directly from the point \ref{droite} of proposition \ref{alg}.
\end{proof}

\begin{corollary}\label{finitude}
With the notations of proposition \ref{geom}, call $s$ the translation length of $h$. If $g^p$ fixes a $h$-positive half-line and $g$ fixes a segment of the axis of length $d=s\cdot\dim D_g$ then $g$ fixes a $h$-positive half-line.
\end{corollary}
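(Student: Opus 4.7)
The plan is to encode the desired conclusion as a linear recurrence with integer coefficients on the $\varphi_h$-orbit of $g$, and then propagate the fixed-point information given by the segment hypothesis.

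First I would apply Proposition~\ref{geom} to the assumption that $g^p$ fixes an $h$-positive half-line: this supplies a $\varphi_h$-invariant subspace $D_g \subset D_h$ containing a power of $g$ (hence $g$ itself, by $\Q$-linearity) in $D_g \cap G_{w_0}$, and such that the restricted modulus $\varphi_{h|D_g}$ has minimal polynomial $m(X) = X^r + a_{r-1}X^{r-1} + \dots + a_0 \in \Z[X]$. Writing $n := \dim D_g$, one has $r \leq n$ since $m$ divides the characteristic polynomial. Setting $g_k := \varphi_h^k(g) \in D_g$ for $k \in \Z$, the construction of the modulus turns the desired statement ``$g$ fixes an $h$-positive half-line issuing from $w_0$'' into the purely algebraic condition ``$g_k \in G_{w_0}$ for every $k \geq 0$''.

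Next, the segment hypothesis supplies the initial data. A subsegment of $\mathcal A$ of length $sn$ contains $n+1$ consecutive vertices of the $\langle h\rangle$-orbit of $w_0$; after translating $w_0$ along $\mathcal A$ (which leaves the modulus canonically unchanged by Lemma~\ref{propmodule}), I would arrange that these $n+1$ vertices correspond to $k = 0, 1, \ldots, n$, yielding $g_0, g_1, \ldots, g_n \in G_{w_0}$. The core algebraic step is then the Cayley--Hamilton identity $m(\varphi_h) = 0$ applied to each $g_k$, which rearranges to the linear recurrence
\[
g_{k+r} = -a_{r-1}\, g_{k+r-1} - \dots - a_0\, g_k.
\]
Because its coefficients are integers and the first $r$ terms $g_0, \ldots, g_{r-1}$ already lie in $G_{w_0}$ (using $r \leq n$), a straightforward induction on $k$ propagates $g_k \in G_{w_0}$ to all $k \geq 0$, which is exactly the conclusion sought.

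The only potentially delicate point I anticipate is ensuring that the $\Z$-linear combinations appearing in the recurrence genuinely land in the subgroup $G_{w_0}$ rather than in a commensurable overlattice of $D_g$ -- but this is automatic because $G_{w_0}$ is itself closed under $\Z$-linear combinations. The degree bound $r \leq n$ also pinpoints why the statement specifies the segment length $d = s\cdot\dim D_g$: one needs $r$ consecutive integer seed values to start the recurrence, and the segment of this length provides $n+1 \geq r$ of them.
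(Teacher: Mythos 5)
Your argument is correct and is essentially the paper's own proof in a different dress: the paper picks $D_g$ of minimal dimension so that $(\varphi_h^k(g))_{0\le k<r}$ is a basis and observes that the companion matrix of $\varphi_{h|D_g}$ in this basis has integer entries, which is exactly your integer linear recurrence $g_{k+r}=-a_{r-1}g_{k+r-1}-\dots-a_0g_k$ read off from the minimal polynomial. Your formulation has the minor advantage of not needing $D_g$ minimal (only $r\le\dim D_g$), and your closing remarks correctly identify why the segment length $s\cdot\dim D_g$ is what makes the seed values available.
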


\begin{proof} Suppose $g^p$ fixes a $h$-positive half-line. Let $D_g$ be as in proposition \ref{geom}. We may assume that $D_g$ is chosen of minimal dimension. Define $r=dim D_g$ its dimension. If $g$ fixes a segment of the axis of length $d$, then for all $0 \leq k< r$, the elements $\varphi^k(g)$ all fix a same vertex $v$ of the axis. 
Moreover $(\varphi^k(g))$ generate $D_g(\subset G_v\otimes \Q)$, thus $(\varphi^k(g))_{k<r}$ is a basis of $D_g(\subset G_v\otimes \Q)$. The matrix of $\varphi_{|D_g}$ in this basis is  $\left(
\begin{array}{ll}
0 & a_0\\
Id_{r-1} & A                                                                                                                                                                                                                                                                               \end{array}
\right)$
with $a_0$ an element of $\Q$ and $A$ a column vector of length $r-1$ of elements of $\Q$. From proposition \ref{geom}, the minimal polynomial of $\varphi_{|D_g}$ is in $\Z[X]$, thus the coefficients of the vector  $A$ and the element $a_0$ belong to $\Z$. That is $\varphi_{|D_g\cap G_v}$ has its image in $D_g\cap G_v$. Hence, for all $k\in \N$ we have $\varphi^k(g)\in D_g\cap G_v$. Thus $g$ fixes a $h$-positive half-line.
\end{proof}

This proposition is algorithmic. More precisely:
\begin{proposition}\label{positions}
Given a hyperbolic element $h$ with axis $\mathcal A_h$ and  any element $g$ with characteristic space $\mathcal A_g$, we may decide is which case $g$ fall :
\begin{itemize}
\item The intersection $\mathcal A_g\cap\mathcal A_h$ is finite (possibly empty),
\item the intersection $\mathcal A_g\cap\mathcal A_h$ is a positive half-line of $\mathcal A_h$,
\item the intersection $\mathcal A_g\cap\mathcal A_h$ is a negative half-line of $\mathcal A_h$,
\item $\mathcal A_g=\mathcal A_h$.
\end{itemize}
Moreover, if $\mathcal A_g\cap\mathcal A_h$ is empty, we may compute the shortest path between the two spaces. If this intersection is finite, we may compute this intersection. If this intersection is a half-line (with a known direction), we may compute its origin.
\end{proposition}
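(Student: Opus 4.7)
The plan is to reduce the four-way classification to two Boolean decision problems: does $\mathcal{A}_g \cap \mathcal{A}_h$ contain an $h$-positive half-line, and does it contain an $h$-negative half-line? Since $\mathcal{A}_g \cap \mathcal{A}_h$ is a convex subset of the line $\mathcal{A}_h$, the pair of answers determines the type completely: both yes yields $\mathcal{A}_g = \mathcal{A}_h$; exactly one yes yields the corresponding half-line; both no yields a finite (possibly empty) intersection. The remaining geometric data (origin of a half-line, endpoints of a finite intersection, or shortest bridge) will then be produced by bounded walks in $\mathcal{T}$.

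Begin by deciding whether $g$ is elliptic or hyperbolic using Corollary \ref{constructionarbre}, constructing a fundamental domain of $h$, and computing the modulus $(D_h, \varphi_h)$ from Lemma \ref{propmodule}. When $g$ is hyperbolic, two distinct lines in a tree meet in a bounded convex set, so both half-line predicates are vacuous; the only nontrivial test is $\mathcal{A}_g = \mathcal{A}_h$, checked by verifying via Corollary \ref{constructionarbre} that $g$ sends two adjacent vertices of $\mathcal{A}_h$ back into $\mathcal{A}_h$. When equality fails, the bounded intersection or the bridge is read from the paths between fundamental domains of the two axes constructed by Proposition \ref{constructionarbre1}.

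When $g$ is elliptic, Proposition \ref{geom} furnishes a decidable test for each half-line predicate \emph{up to taking powers}: first check that $g \in [D_h]$ by producing a power of $g$ lying in the explicitly computable lattice $\widehat V_h$ (accessible through Lemma \ref{propmodule}), then compute the smallest $\varphi_h$-invariant subspace $D_g \subseteq D_h$ containing that power by iterating $\varphi_h$ on it until the dimension of the $\Q$-span stabilizes, and test whether the minimal polynomial of $\varphi_h|_{D_g}$ lies in $\Z[X]$. Exchanging $\varphi_h$ for $\varphi_h^{-1}$ tests the $h$-negative side. The promotion from ``some power of $g$'' to ``$g$ itself'' is exactly Corollary \ref{finitude}: $g$ fixes an $h$-positive half-line iff $g$ already fixes $s \cdot \dim D_g$ consecutive vertices of $\mathcal{A}_h$, a bounded check via Corollary \ref{constructionarbre}.

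Finally, to produce the explicit geometric data, project any fixed vertex of $g$ onto $\mathcal{A}_h$ using Proposition \ref{constructionarbre1}: if the projection is not fixed by $g$, then by convexity of $\mathcal{A}_g$ one has $\mathcal{A}_g \cap \mathcal{A}_h = \emptyset$ and the path traversed is the bridge; otherwise the projection yields a vertex $w \in \mathcal{A}_g \cap \mathcal{A}_h$. From $w$, walk along $\mathcal{A}_h$ in each direction, testing vertex-fixation by $g$ via Corollary \ref{constructionarbre}; by Corollary \ref{finitude} the walk either exits $\mathcal{A}_g$ within $s \cdot \dim D_g$ steps (so that the last fixed vertex is the required endpoint or half-line origin), or certifies that a half-line has been entered in that direction, consistently with the predicate outcomes of the previous step. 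The main obstacle is precisely this effective bounding: Proposition \ref{geom} alone produces only an ``up to powers'' statement, and without the sharp length bound of Corollary \ref{finitude} the half-line origins could not be pinned down in finite time.
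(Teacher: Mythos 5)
Your treatment of the elliptic case is essentially the paper's: reduce to the two predicates ``fixes an $h$-positive half-line'' and ``fixes an $h$-negative half-line'', decide each via Proposition \ref{geom} together with the bounded-segment check of Corollary \ref{finitude}, and read off the four cases from the pair of answers. That part is fine.

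The hyperbolic case, however, contains a genuine error. You assert that two distinct lines in a tree meet in a bounded convex set, so that for hyperbolic $g$ the only possibilities are a finite intersection or $\mathcal A_g=\mathcal A_h$. This is false: two distinct axes can share a half-line. For instance in $BS(1,2)=\langle a,t\mid tat^{-1}=a^2\rangle$ (a \GBS group), the element $a$ fixes exactly the $t$-negative half-line $\{t^{-n}\cdot v_0\}_{n\geq 0}$ of $\mathcal A_t$, so $\mathcal A_{ata^{-1}}=a\cdot\mathcal A_t$ meets $\mathcal A_t$ in precisely that half-line while being a different line. On this input your algorithm would declare the intersection finite and then loop forever searching for its endpoint; your proposed equality test ($g$ maps an edge of $\mathcal A_h$ back into $\mathcal A_h$) also returns a false positive here, since $ata^{-1}$ translates edges deep in the shared ray along that ray. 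The paper closes exactly this gap by first checking whether $\mathcal A_g\cap\mathcal A_h$ has length greater than $l(g)+l(h)$; if so, the commutator $g'=[g,h]$ fixes a vertex of the intersection, hence is elliptic, and $\mathcal A_{g'}\cap\mathcal A_h$ lies within Hausdorff distance $l(g)+l(h)$ of $\mathcal A_g\cap\mathcal A_h$, so the hyperbolic case reduces to the elliptic one already solved. You need this (or some equivalent) reduction; without it the classification and the endpoint computation both fail for hyperbolic $g$.
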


\proof
The moreover part is a direct consequence of proposition \ref{constructionarbre1} and corollary \ref{constructionarbre}:
if the intersection $\mathcal A_g\cap\mathcal A_h$ is empty, we find one vertex of $A_g$ and one of $A_h$ and construct the path between them. We may then reduce this path until no edge belong to $\mathcal A_g$ or $ \mathcal A_h$. The obtained path is the shortest path between $A_g$ and $A_h$.
If the intersection is not empty, using the same method we may find a vertex $v$ which belongs to $\mathcal A_g\cap\mathcal A_h$. If $\mathcal A_g\cap\mathcal A_h$ is finite in a given direction of $A_h$, we may run trough $A_h$ starting from $v$ in this direction until its extremity.

Let us prove now the main part of the proposition.
Let $v$ be a vertex in $\mathcal A_g$, and $w$ a vertex in $\mathcal A_h$. Call $d$ the translation length of $h$ on $\mathcal A_h$. 
Using proposition \ref{constructionarbre1}, we first construct the path $[v,w]$ between $v$ and $w$. 
Applying corollary \ref{constructionarbre}, we may check if  $[v,w]\subset \mathcal A_g\cup \mathcal A_h$.
If $[v,w]\not\subset \mathcal A_g\cup \mathcal A_h$ then the two characteristic spaces are disjoint.

 If $[v,w]\subset \mathcal A_g\cup \mathcal A_h$, 
 we have to distinguish the elliptic and the hyperbolic cases.
\begin{enumerate}[leftmargin=0cm, topsep=0cm]
\item First assume $g$ is elliptic. 
Call $v'$ a vertex belonging to the intersection $\mathcal A_g\cap \mathcal A_h$. The element $g$ belongs to $G_{v'}$. 

Let $D_h$ be  the definition set of $\varphi_h$ in $G_{v'}\otimes \Q$. 

By corollary \ref{propmodule}, we may construct $D_h\subset G_{v'}\otimes \Q$. Call $n$ its dimension.
We may decide if $g\in [D_h]$,.

If $g\not\in [D_h]$ then  $\mathcal A_g\cup \mathcal A_h$ is finite. 

If $g\in D_h$, we compute the smallest $\varphi_h$-invariant subspace $D_g=\langle \varphi^i(g), i\in[|1,n|]\rangle$ of $D_h$ containing $g$, and the minimal polynomial $P_g$ of $\varphi_{h|D_g}$. Then, applying proposition \ref{geom} and corollary \ref{finitude}, the element $g$ fixes a positive half line if and only if $P_g$ belongs to $\Z[X]$ and $g$ fixes a segment of length $d\cdot dim D_g$. As the characteristic space is connected, checking that $g$ fixes a segment of length $d\cdot dim D_g$ containing $v'$ is sufficient.

To check if $g$ fixes the negative half-line, it suffices to repeat the process interchanging $h$ and $h^{-1}$.

The four cases are the following:
\begin{itemize}[topsep=0cm]
\item the element $g$ does not fix neither a positive nor a negative half-line of $\mathcal A_h$, then  $\mathcal A_g\cap\mathcal A_h$ is finite;
\item  the element $g$ fixes a positive half-line, but no negative half-line, then $\mathcal A_g\cap\mathcal A_h$ is a positive half-line;
\item the element $g$ fixes a negative half-line, but no positive half-line, then $\mathcal A_g\cap\mathcal A_h$ is a negative half-line;
\item the element $g$ it fixes both a positive and a negative half-line, then it fixes globally $\mathcal A_h$.
\end{itemize}

\item Assume now that $g$ is hyperbolic. Call $d'$ the translation length of $g$. Construct $v'$ a vertex of the intersection $\mathcal A_g\cap\mathcal A_h$. We then check if the intersection $\mathcal A_g\cap\mathcal A_h$ has length $>d+d'$.

If $\mathcal A_g\cap \mathcal A_h$ has length less than $d+d'$, then  it is trivially finite. If its length is greater than $d+d'$, call $g'=[g,h]$ the commutator of $g$ and $h$. Then $g'$ fixes a vertex of $\mathcal A_g\cap \mathcal A_h$ and thus is elliptic. Moreover $\mathcal A_g\cap \mathcal A_h$ and $\mathcal A_{g'}\cap \mathcal A_h$ have Hausdorff distance less than $d+d'$. 
As we may decide if $\mathcal A_{g'}\cap \mathcal A_h$ is finite, a positive half-line, a negative half-line or the whole axis $ \mathcal A_h$ ,  we may also decide it for $\mathcal A_g\cap \mathcal A_h$.\hfill$\qedsymbol$
\end{enumerate}

Take $h$, $g$ and $g'$ three elements of $G$ with $h$ hyperbolic, with characteristic spaces $\mathcal A_h$, $\mathcal A_g$ and $\mathcal A_{g'}$. Suppose that $\mathcal A_g\cap \mathcal A_h$ and $\mathcal A_{g'}\cap \mathcal A_h$ have the same  form in the sens of proposition \ref{positions}, we may define the \textit{shift length between $g$ and $g'$} in the following way:
\begin{itemize}[itemsep=0cm]
\item if $\mathcal A_g\cap \mathcal A_h$ and $\mathcal A_{g'}\cap \mathcal A_h$ are both empty. Let $v$ and $w$ be the closest vertices of $A_h$ to respectively $A_g$ and $A_{g'}$, then the shift length between $g$ and $g'$ is the distance between $v$ and $w$;
\item if $\mathcal A_g\cap \mathcal A_h$ and $\mathcal A_{g'}\cap \mathcal A_h$ are both finite, assume $\mathcal A_g\cap \mathcal A_h=[v,v']$ and $\mathcal A_{g'}\cap \mathcal A_h=[w,w']$ with $v$ and $w$ the endpoints of the segments in the negative direction of $\mathcal A_h$,  the shift length between $g$ and $g'$ is the distance between $v$ and $w$;
\item if $\mathcal A_g\cap \mathcal A_h$ and $\mathcal A_{g'}\cap \mathcal A_h$ are both half-lines of the same direction, the shift length between $g$ and $g'$ is the distance between the origin of the two half-lines;
\item if $\mathcal A_g\cap \mathcal A_h=\mathcal A_{g'}\cap \mathcal A_h=A_h$, the shift length is not defined.
\end{itemize}

By proposition \ref{constructionarbre1}, the shift length is computable.

\section{Multiple conjugacy problem}\label{multiple}

Now let us solve problem \ref{multi2}, which we proved to be equivalent to theorem \ref{thintro}.

\begin{theorem}\label{multiconj}
 Let $G$ be a \GGBS group, $A=(a_1,\dots,a_n)$ and $B=(b_1,\dots,b_n)$ two $n$-tuples of $G$, and $a$ an hyperbolic element. We may decide whether $A$ and $B$ are conjugate by an element of $C_G(a)$, the centralizer of $a$ in $G$, or not.
\end{theorem}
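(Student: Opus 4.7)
My plan is to exploit the structural decomposition $C_G(a) = E \rtimes \langle h \rangle$ given by Proposition \ref{centralisateur}, where $E \simeq \Z^p$ is elliptic and fixes $\mathcal{A}_a$ pointwise, and $h$ translates along $\mathcal{A}_a$ with minimal translation length $l(h)$; explicit generators of both factors are computable. Any potential conjugator $g \in C_G(a)$ has a unique form $g = e h^k$ with $e \in E$ and $k \in \Z$, so the search splits into first pinning down $k$ and then solving for $e$ inside the abelian group $E$.

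To determine $k$, observe that any $g \in C_G(a)$ conjugating $A$ to $B$ must send $\mathcal{A}_{a_i}$ to $\mathcal{A}_{b_i}$ while preserving $\mathcal{A}_a$. For each $i$, apply Proposition \ref{positions} to compute the qualitative form of $\mathcal{A}_{a_i} \cap \mathcal{A}_a$ and of $\mathcal{A}_{b_i} \cap \mathcal{A}_a$. If these forms differ for some $i$, output ``not conjugate''. Otherwise, whenever the common form is not all of $\mathcal{A}_a$, the shift length between $a_i$ and $b_i$ along $\mathcal{A}_a$ (as defined at the end of Section \ref{charac}, together with the direction dictated by mapping the closest vertex of $\mathcal{A}_{a_i}$ to that of $\mathcal{A}_{b_i}$) must equal $k \cdot l(h)$; this either fails immediately (if the shift is not a multiple of $l(h)$, or if two indices demand incompatible values) or singles out a unique candidate $k$. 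In the remaining subcase, where $\mathcal{A}_{a_i} \cap \mathcal{A}_a = \mathcal{A}_{b_i} \cap \mathcal{A}_a = \mathcal{A}_a$ for every $i$, all of $A$, $B$ and any conjugator lie in the setwise stabilizer of $\mathcal{A}_a$, which is polycyclic, and the problem reduces to multiple conjugacy in a polycyclic group, solved by Baumslag--Cannonito--Robinson \cite{BaCaRo}.

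Once $k$ is fixed, set $a'_i := h^k a_i h^{-k}$; it remains to decide whether some $e \in E$ satisfies $e a'_i e^{-1} = b_i$ for all $i$. Writing $e = \prod_{j=1}^p f_j^{k_j}$ in the fixed basis $(f_1,\ldots,f_p)$ of $E$, each equation $e a'_i e^{-1} b_i^{-1} = 1$ has exactly the form covered by Lemma \ref{lemmetechnique}: the $f_j$'s and their inverses play the role of the elliptic elements of the lemma (with $\sigma$ mapping each occurrence to its coordinate), the constants $a'_i$ and $b_i^{-1}$ play the role of the $g_j$'s, and the unknowns are the integer exponents $(k_1, \ldots, k_p)$. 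The lemma outputs, for each $i$, the solution set $K_i$ as an explicit finite union of affine sublattices of $\Z^p$. A valid $e$ exists if and only if $\bigcap_i K_i \neq \emptyset$, which is decidable by standard linear algebra over $\Z$; any point of the intersection produces an explicit conjugator $g = e h^k$.

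The principal obstacle is orchestrating Lemma \ref{lemmetechnique} uniformly across the $n$ equations: one must choose loop forms for $a'_i$, $b_i$, and the basis elements of $E$ that all share compatible base loops and that preserve the template required by the lemma, so that the resulting parameter space for the unknown $e$ is consistently identified with $\Z^p$ across the $n$ constraints. A secondary subtlety is the degenerate subcase where every $\mathcal{A}_{a_i}$ and $\mathcal{A}_{b_i}$ contains $\mathcal{A}_a$: one must justify that the setwise stabilizer of $\mathcal{A}_a$ is genuinely polycyclic and effectively presentable, and that the restricted problem of finding $g$ inside the specified subgroup $C_G(a)$ (rather than in all of the stabilizer) still fits within the scope of the algorithm of \cite{BaCaRo}.
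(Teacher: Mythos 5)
Your proposal is correct and follows essentially the same route as the paper: decompose $C_G(a)=E\rtimes\langle h\rangle$ via Proposition \ref{centralisateur}, pin down the $\langle h\rangle$-coordinate by comparing the forms and shift lengths of ${\mathcal A}_a\cap{\mathcal A}_{a_i}$ and ${\mathcal A}_a\cap{\mathcal A}_{b_i}$ (Proposition \ref{positions}), fall back to polycyclic-group conjugacy when all intersections equal ${\mathcal A}_a$, and finish by intersecting the affine sublattices of $E$ produced by Lemma \ref{lemmetechnique}. The two subtleties you flag are exactly the ones the paper relies on (the ``moreover'' part of Lemma \ref{colsimple} for compatible base loops, and the cited polycyclic-group algorithms for the degenerate case).
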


\begin{proof}

By proposition \ref{centralisateur}, we may explicitly give $C_G(a)$ under the form $E \rtimes <h>$. Thus, the new problem is now:
\begin{center}\textit{Is there $\alpha\in E$ and $m\in \Z$ such that $\alpha h^mAh^{-m}\alpha^{-1}=B$?}\end{center}
We proceed in two steps : first finding $m$ then determining  $\alpha$.
 
Denote $l$ the translation length of $h$ on its axis $\mathcal A_h={\mathcal A}_{a}$. Every element of $E$ fixes ${\mathcal A}_h$ pointwise.
If there exists $g\in C_G(a)$ which conjugates $a_i$ to $b_i$, then it sends the characteristic space of $a_i$ to the one of $b_i$. In particular the intersections ${\mathcal A}_{a}\cap {\mathcal A}_{a_i}$ and ${\mathcal A}_{a}\cap {\mathcal A}_{b_i}$ differ by a translation.

Applying proposition \ref{positions}, we may determine ${\mathcal A}_{a}\cap {\mathcal A}_{a_i}$ and ${\mathcal A}_{a}\cap {\mathcal A}_{b_i}$ for every $i$. If each intersection is equal to the whole axis ${\mathcal A}_h$, then the group $G_{A,B}$ generated by the $a_i$, the $b_i$ and $C_G(a)$ is polycyclic (because it acts on a line with free abelian stabilizers). As $A$ and $B$ are conjugate in $G$ if and only if they are conjugate in $G_{A,B}$ then the problem is reduced to multiple conjugacy in polycyclic groups which is solvable \cite{GrunSe,BaCaRo}. 

Otherwise, choose $i$ such that the intersection ${\mathcal A}_{a}\cap {\mathcal A}_{a_i}$ differs from ${\mathcal A}_{a}$. Call $\mathcal F$ the form of this intersection, which is either "finite", "positive half-line" or "negative half-line".
Compute the intersection ${\mathcal A}_{a}\cap {\mathcal A}_{b_i}$. If the form of this intersection is not $\mathcal F$, then $A$ and $B$ are not conjugate.

If ${\mathcal A}_{a}\cap {\mathcal A}_{b_i}$ has form $\mathcal F$, then compute $d$ the shift length between $a_i$ and $b_i$. If $l$ does not divide $d$ then no element of $C_G$ send  ${\mathcal A}_{a}\cap {\mathcal A}_{b_i}$ to ${\mathcal A}_{a}\cap {\mathcal A}_{b_i}$ thus $A$ and $B$ are not conjugate.



If $l$ divides $d$, after conjugating $B$ by $h^{\frac {d}{l}}$, we have the equivalence: 
\begin{center}\textit{$A$ and $B$ are conjugate in $C_G(a)$ if and only if they are conjugate by an element of $E$.}\end{center}
By lemma \ref{lemmetechnique}, we may compute the sets of elements $E_i\subset E$ which conjugate $a_i$ to $b_i$. Every $E_i$ is a union of sublattice of $E$. Then $A$ and $B$ are conjugate if and only if the intersection $\bigcap_{i=2}^{n} E_i$ (which is computable) is not empty. Thus the problem is algorithmically solvable.
\end{proof}

\section{\texorpdfstring{Case of \GBS groups}{}}\label{sectiongbs}

The case of  \GGBS groups whose vertex and edge groups are cyclic is different. These groups are called \GBS groups.
In the case of \GBS groups, the conjugacy problem is solvable even for elliptic elements.

For hyperbolic elements, the problem is solved by theorem \ref{conj}. The last case to study is the one of elliptic elements. The method consists in reducing the problem into the reachability problem in Petri Nets which is solvable according to \cite{Reut}.

First, let us solve the problem for \GBS groups with a \GBS decomposition with one vertex. Let $G$ be one of these groups. Call $v$ the vertex and $a\in G$ an element generating $G_v$. Let $k$ be the number of loops in the graph of groups. The group admits a presentation $G=\langle a, t_1,\dots, t_k | t_ia^{\sigma_i}t_i^{-1}=a^{\tau_i}\rangle$. To introduce symmetry in the presentation, we introduce $k$ new generators $t_{k+1},\dots, t_{2k}$, and $2k$ relation $t_{i+k}=t_i^{-1}$, and $ t_ia^{\sigma_i}t_i^{-1}=a^{\tau_i},i\in[|k+1,2k|]$ where $\sigma_{i+k}=\tau_i$ et $\tau_{i+k}=\sigma_i$. Note that this gives a preferred presentation (see section \ref{presad} for the definition).

An elliptic element of $G$ is obviously conjugate to an element of the subgroup $\langle a \rangle$. Moreover, we may find explicitly an element of $\langle a \rangle$ in its conjugacy class (\textit{e.g.} by an exhaustive enumeration). 
We may thus restrict the conjugacy problem for elliptic elements to the conjugacy problem for elements in $\langle a \rangle$.

\begin{proposition}\label{Casun}
The conjugacy problem in $G$ is solvable.
\end{proposition}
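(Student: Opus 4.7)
With the preliminaries already in place, and since hyperbolic conjugacy in $G$ is handled by Proposition~\ref{conj}, the task reduces to deciding, for $a^p, a^q \in \langle a \rangle$ both nontrivial, whether there is $g \in G$ with $g a^p g^{-1} = a^q$. I would proceed in three steps.

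\emph{Step 1: Reduction to a combinatorial reachability problem.} Using the symmetric presentation, write a candidate conjugator as $g = a^{c_0} t_{j_1} a^{c_1} \cdots t_{j_n} a^{c_n}$ with $j_1, \dots, j_n \in [1, 2k]$. Because $\langle a \rangle$ is abelian, every $a^{c_i}$ commutes with any pure power of $a$ conjugated through it, and in $g a^p g^{-1}$ the inner $a^{c_n}$ factor cancels immediately against $a^{-c_n}$. Applying the normal form Lemma~\ref{formenormale} iteratively to the word $g a^p g^{-1} a^{-q}$ (which is trivial in $G$, so must admit a complete reduction, each pinch occurring at an innermost $t_{j_i} \cdots t_{j_i}^{-1}$ pair), one establishes the equivalence: $a^p$ and $a^q$ are conjugate in $G$ if and only if there exists a sequence of integers $p = m_0, m_1, \dots, m_n = q$ such that for each $i$ some $j \in [1, 2k]$ satisfies $\sigma_j \mid m_i$ and $m_{i+1} = m_i \tau_j/\sigma_j$. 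The $c_i$'s play no role.

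\emph{Step 2: Encoding as Petri net reachability.} Let $S$ be the finite set of primes dividing at least one of $p, q, \sigma_1, \tau_1, \dots, \sigma_{2k}, \tau_{2k}$. Every intermediate $m_i$ has prime support contained in $S$ (since the moves rescale by $\tau_j/\sigma_j$) and, by the divisibility constraint, stays in $\Z$. Encode a positive integer with support in $S$ by its valuation vector $(\nu_\pi(m))_{\pi \in S} \in \N^S$. Each move $j$ then becomes a Petri net transition that consumes $\nu_\pi(\sigma_j)$ tokens from place $\pi$ and produces $\nu_\pi(\tau_j)$ tokens there; the enabling condition of the transition is exactly $\sigma_j \mid m$. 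Thus the move-reachability question is precisely the reachability problem from $(\nu_\pi(p))_\pi$ to $(\nu_\pi(q))_\pi$ in this finite Petri net, which is decidable by~\cite{Reut}. The trivial case $p = 0$ or $q = 0$ is dispatched separately, and the sign of $m$ (which transforms multiplicatively by $\mathrm{sign}(\tau_j/\sigma_j)$) is handled either via an auxiliary two-state parity component or by pre-checking sign compatibility.

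\emph{Main obstacle.} Step~1 is the heart of the argument: one must verify carefully that the abelianness of the vertex group really does make the $c_i$'s irrelevant, so that a group-theoretic conjugation reduces cleanly to a sequence of elementary arithmetic moves, with no extra slack coming from creative choices of the $c_i$. Once this is done, Step~2 is a clean finite encoding of multiplicative arithmetic on $\N^S$, and the conclusion invokes~\cite{Reut}.
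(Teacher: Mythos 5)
Your proposal is correct and follows the same overall architecture as the paper: reduce conjugacy of $a^p$ and $a^q$ to the existence of a ``conjugating sequence'' $p=m_0,m_1,\dots,m_s=q$ with $\sigma_{j_i}\mid m_i$ and $m_{i+1}=m_i\tau_{j_i}/\sigma_{j_i}$, then encode such sequences as markings of a Petri net over the valuation vectors at the finitely many relevant primes (plus a sign component) and invoke \cite{Reut}. The one genuine difference is how the key equivalence is established. You propose a Britton's-lemma argument: take the conjugator $g$ in reduced form, observe that $ga^pg^{-1}a^{-q}$ is trivial, and peel off pinches from the innermost $t_{j_n}\,a^{c_n}a^pa^{-c_n}\,t_{j_n}^{-1}$ outward, the abelianness of $\langle a\rangle$ killing the $c_i$'s. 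This works (and the point you flag as the main obstacle is genuinely fine, since $a^{c_i}$ commutes with every power of $a$, so each pinch condition reads $\sigma_{j_i}\mid m_i$ regardless of the $c_i$), but you should state explicitly that $g$ is taken in \emph{reduced} form so that the only possible pinch is at the central juncture; otherwise the reduction of lemma~\ref{formenormale} could a priori occur inside $g$. The paper instead argues geometrically: if $cg'c^{-1}=g$ then $g$ fixes the path $[v,c\cdot v]$ in the Bass-Serre tree, and reading the edge inclusions $a_i^{\sigma_{j_i}}=a_{i+1}^{\tau_{j_i}}$ along that path yields the sequence directly, with no bookkeeping of vertex-group letters at all. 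The two arguments are equivalent in content (normal forms versus tree geometry); the tree version is slightly cleaner here and generalizes immediately to the multi-vertex case of Theorem~\ref{conjgbs}, whereas your version would need the path recorded in the vertex set as an extra component, exactly as the paper does there.
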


\begin{proof}
Let $g=a^m$ and $g'=a^n$ be  two elements of $\langle a \rangle$. 
We prove that $g$ and $g'$ are conjugate in $G$ if and only if there exists a finite sequence of integers $(m_0=m,m_1,\dots, m_s=n)$ such that for all $i<s$ there exists a integer $j_i$ verifying $\sigma_{j_i} | m_i$ and $m_{i+1}= m_i \frac{\tau_{j_i}}{\sigma_{j_i}}$. Call such a sequence a \textit{conjugating sequence}.


If there exists a conjugating sequence $(m_0=m,m_1,\dots, m_s=n)$. Directly, we have the equality $t_{j_s}t_{j_{s-1}}\dots t_{j_1}g t_{j_1}^{-1}t_{j_2}^{-1}\dots t_{j_s}^{-1}=g'$. Thus $g$ and $g'$ are conjugate.

If $g$ and $g'$ are conjugate. Let $c$ be such that $cg'c^{-1}=g$. Then $g$ fixes both vertices $v$ and $c\cdot v$ in the Bass-Serre tree. Thus, it fixes the path $[v,c\cdot v]$. Let  $(v_0=v,v_1,\dots, v_s= c\cdot v)$. Then there are some generators $a_i$ of $G_{v_i}$ and integers $j_i$ such that $a_0 = a$ and $a_i^{\sigma_{j_i}}=a_{i+1}^{\tau_{j_i}}$.
Note that $a_s=cac^{-1}$.

Take the sequence $(m_0,\dots, m_s)$ of integers such that $g=a_i^{m_i}$. Then as $a_i^{m_i}=a_{i+1}^{m_{i+1}}$ we have $\sigma_{j_i} | m_i$ and $m_{i+1}= m_i \frac{\tau_{j_i}}{\sigma_{j_i}}$. Since $g=a^m$  we have $m_0=m$ and since  $cg'c^{-1}=ca^nc^{-1}$ , and $a_s=cac^{-1}$, we have $m_s=n$. 
The sequence $(m_0,\dots, m_s)$ is as required.

We reduced the conjugacy problem to the problem of finding such a sequence a sequence $(m_0,\dots, m_s)$. Remark that if such a sequence exists, the prime divisors of the $m_i$'s belongs to the (finite) set of prime divisors $\mathcal P=\{p_1,\dots, p_k\}$ of the elements $\tau_j$, $\sigma_j$, $n$ and $m$. Call $\pi$ the natural bijection between integers whose prime divisors are in $\mathcal P$ and $\N^k\times \Z/2\Z$ (which associate to an integer its decomposition in prime factors and its sign).

Finding a conjugating sequence $(m_0=n,\dots, m_s=m)$ is now equivalent to finding a sequence $(r_0=\pi(n),\dots, r_s=\pi(m))$ of elements of $\N^k\times \Z/2\Z$ such that for all $i<s$ there exists an integer $j_i$ verifying $r_i-\pi(\sigma_{j_i}) \in \N^k\times \Z/2\Z$ and $r_{i+1}= r_i +\pi(\tau_{j_i})- \pi({\sigma_{j_i}})$.

The problem to decide if such a sequence exists is proven to be solvable in \cite{Reut} (the $\Z/2\Z$ does not appear in the proof of \cite{Reut} but may easily be added). 
\end{proof}
Using the same method we may prove the conjugacy problem is solvable in any \GBS group:

\begin{theorem}\label{conjgbs}
 The conjugacy problem in \GBS groups is solvable.
 \end{theorem}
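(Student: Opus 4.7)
Given two elements $g, g' \in G$, we first compute their translation lengths via Corollary~\ref{constructionarbre}. If they differ, $g$ and $g'$ are not conjugate. If both are positive, we invoke the corollary following Proposition~\ref{conj} to decide hyperbolic conjugacy. The only remaining case is that both elements are elliptic, and this is what we address below.

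Using Corollary~\ref{constructionarbre} we compute fixed vertices of $g$ and of $g'$ and conjugate each into a vertex group, thereby reducing to deciding, for given vertices $v, v' \in V_\Gamma$ and integers $m, n$, whether $a_v^m$ and $a_{v'}^n$ are conjugate in $G$. We claim this holds if and only if there exists a \emph{conjugating sequence}: a path $(v_0 = v, e_1, v_1, \dots, e_s, v_s = v')$ in $\Gamma$ together with integers $m_0 = m, m_1, \dots, m_s = n$ such that for each $i$, writing $\sigma_{e_i}$ and $\tau_{e_i}$ for the integers encoding the inclusions $G_{e_i} \hookrightarrow G_{v_{i-1}}$ and $G_{e_i} \hookrightarrow G_{v_i}$ in the preferred bases, we have $\sigma_{e_i} \mid m_{i-1}$ and $m_i = m_{i-1} \tau_{e_i}/\sigma_{e_i}$. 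The backward direction is immediate from the preferred presentation: a suitable product of stable letters $t_{e_s} \cdots t_{e_1}$ conjugates $a_v^m$ into $a_{v'}^n$. For the forward direction, if $c\, a_v^m\, c^{-1} = a_{v'}^n$, then $a_{v'}^n$ fixes both $v'$ and $c \cdot v$ in the Bass-Serre tree, and hence the entire path joining them; projecting to $\Gamma$ and tracking the generator of the stabilizer of each vertex along the path gives the required sequence, exactly as in the one-vertex argument of Proposition~\ref{Casun}.

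The existence of a conjugating sequence is decided by reduction to reachability in a Petri net, generalizing the final step of Proposition~\ref{Casun}. Let $\mathcal P = \{p_1, \dots, p_k\}$ be the finite set of primes appearing in $m$, $n$, or any $\sigma_e, \tau_e$; every $m_i$ in a conjugating sequence has its prime divisors in $\mathcal P$, so we encode it as $\pi(m_i) \in \N^k \times \Z/2\Z$ (exponent vector together with sign). Build a Petri net with $k+1$ ``exponent'' places, plus one place per vertex of $\Gamma$ that will hold a single token recording the current vertex. For each oriented edge $e$ of $\Gamma$ from $w$ to $w'$, add a transition that moves the vertex token from the $w$-place to the $w'$-place, consumes $\pi(\sigma_e)$ from the exponent places, and produces $\pi(\tau_e)$. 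A conjugating sequence then exists if and only if the configuration $(\pi(n),\, \text{token on } v')$ is reachable from $(\pi(m),\, \text{token on } v)$, which is decidable by \cite{Reut}.

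The main obstacle is the equivalence in the second paragraph: once one is convinced that every conjugation is realized by a path in $\Gamma$ along which the exponents transform precisely by the $\tau/\sigma$ ratios, the remaining encoding is essentially that of Proposition~\ref{Casun}, enriched by a finite-state component that tracks the current vertex. This finite-state component is cleanly absorbed into the Petri net by the vertex-tokens construction above, so no new difficulty arises beyond the single-vertex case.
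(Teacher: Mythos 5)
Your proposal is correct and follows essentially the same route as the paper: reduce to elements of vertex groups, characterize conjugacy by the existence of a conjugating sequence of exponents along a path in $\Gamma$ (proved exactly as in Proposition~\ref{Casun}), and decide its existence via Petri-net reachability, with the current vertex tracked by an extra token component — which is precisely the paper's $\N^r$ Kronecker-delta factor in the map $\pi$. The only cosmetic difference is that you dispose of the hyperbolic case explicitly at the outset, which the paper does in the preamble to the section rather than inside the proof.
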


\begin{proof}Let $G=\pi_1(\Gamma)$ be a  \GBS group with set of vertices $\mathcal V=\{v_1, \dots v_r\}$, we denote $a_i$ a generator of the group $G_{v_i}$. Let $g$ and $g'$ be two elliptic elements which belongs to the groups of two vertices $v$ and $v'$. Let $m$ and $n$ be such that $g=b^m$ and $g'=b'^n$ where $b$ generates $G_v$ and $b'$ generates $G_{v'}$.

Let  $e_1, \dots, e_r$ be the edges of  $\Gamma$. Denote $\sigma_j$ and  $\tau_j$ the integers  such that the relation associated to $e_j$ is $t_i a^{\sigma_j}_s t^{-1}_i = a_t^{\tau_j}$. Call $i(e_i)$ and $t(e_i)$ the initial and terminal vertex of $e_i$.

We prove that $g$ and $g'$ are conjugate in $G$ if and only if there exists a finite sequence $((m_0,w_0)=(m,v),(m_1,w_1),\dots, (m_s,w_s)=(n,v'))$ of elements of $\Z\times \mathcal V$ such that for all $i<s$ there exists a edge $e_{j_i}$ verifying  $i(e_{j_i})=w_i$, $t(e_{j_i})=w_{i+1}$ and $\sigma_{j_i} | m_i$ and $m_{i+1}= m_i \frac{\tau_{j_i}}{\sigma_{j_i}}$. By extension we also call such a sequence a \textit{conjugating sequence}.

The proof is the same as in the previous proposition.

Let $\mathcal P=\{p_1,\dots, p_k\}$ be the set of prime divisors of $\sigma_j$, $\tau_j$, $m$ and $n$. Let $\mathcal Q=\{q_1,\dots, q_r\}$ be  a set of $r$ prime numbers disjoint from $\mathcal P$.
Note that if a conjugating sequence $((m_0,w_0)=(m,v),(m_1,w_1),\dots, (m_s,w_s)=(n,v'))$ exists then the integers $m_i$ have their prime divisors in $\mathcal P$. Call $\Z_{\mathcal P}$ the set of integers with prime divisors in $\mathcal P$.
Call $\pi$ the map from $\Z_{\mathcal P}\times \mathcal V$ to $(\N^{k}\times \Z/2\Z)\times \N^r$ which maps $(z,v_j)$ to the couple $(s,t)$ where $s\in\N^{k}\times \Z/2\Z$ is the decomposition in prime factors of $z$ (with its sign), and $t\in \N^r$ is the vector $(\delta_{i,j})_i$ where $\delta$ is the Kronecker symbol.

Finding a conjugating sequence $((m_0,w_0)=(m,v),\dots, (m_s,w_s)=(n,v')$ is now equivalent to finding a sequence $(r_0=\pi(n),\dots, r_s=\pi(m))$ of elements of $\N^k\times \Z/2\Z\times \N^r$ such that for all $i<s$ there exists an edge ${e_{j_i}}$ verifying $r_i-\pi(\sigma_{j_i},i(e_{j_i})) \in \N^k\times \Z/2\Z\times N^r$ and $r_{i+1}= r_i +\pi(\tau_{j_i},t(e_{j_i})- \pi({\sigma_i},i(e_{j_i})$.

The problem to decide if such a sequence exists is proven to be solvable in \cite{Reut}.
\end{proof}

\begin{corollary}
 The multiple conjugacy problem in \GBS groups is solvable.
\end{corollary}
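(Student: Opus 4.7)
The plan is to reduce the multiple conjugacy problem in \GBS groups to the two subcases already settled in the paper: Theorem~\ref{thintro}, which handles any $A$ for which $\langle A\rangle$ is not elliptic, and Theorem~\ref{conjgbs}, which solves the \emph{single} conjugacy problem in a \GBS group even for elliptic elements. The task here is to glue these two results together in the remaining case, when $\langle A\rangle$ is elliptic.

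First, I would decide whether $\langle A\rangle$ is elliptic in $\Gamma$: by the version of Serre's lemma recalled in Remark~\ref{changementth}, $\langle A\rangle$ is non-elliptic if and only if some $a_i$ or some product $a_ia_j$ is hyperbolic, which is testable via Corollary~\ref{constructionarbre}. If non-elliptic, invoke Theorem~\ref{thintro} and stop. Otherwise run the same test on $\langle B\rangle$; since ellipticity is a conjugacy invariant, $A$ and $B$ cannot be conjugate if $\langle B\rangle$ fails to be elliptic. Assume from now on that both subgroups are elliptic. In a \GBS group every vertex group is infinite cyclic, so $\langle A\rangle$ and $\langle B\rangle$ are cyclic. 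I would algorithmically construct a vertex $v$ of $\mathcal T$ fixed by every $a_i$: start with a fixed vertex $w_1$ of $a_1$ (Corollary~\ref{constructionarbre}) and, for $i\geq 2$, replace $w_{i-1}$ by its projection $w_i$ onto $\mathrm{Fix}(a_i)$, computable as the midpoint of the path $[w_{i-1},a_iw_{i-1}]$ produced by Proposition~\ref{constructionarbre1}; this keeps $w_i$ inside $\mathrm{Fix}(a_1)\cap\cdots\cap\mathrm{Fix}(a_i)$, as is standard for commuting elliptic tree automorphisms. Perform the analogous construction for $B$, obtaining a vertex $v'$, and write $a_i=a_v^{k_i}$, $b_i=a_{v'}^{l_i}$ using the preferred generators of $G_v$ and $G_{v'}$.

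Now, if all $k_i=0$ then $A$ and $B$ are conjugate if and only if all $l_i=0$, with conjugator $1$. Otherwise set $d=\gcd(k_i)$ and $d'=\gcd(l_i)$, and using Bezout coefficients $c_i$, $c'_i$ produce the explicit generators $a=\prod a_i^{c_i}=a_v^d$ of $\langle A\rangle$ and $b=\prod b_i^{c'_i}=a_{v'}^{d'}$ of $\langle B\rangle$. Any $g$ with $gAg^{-1}=B$ restricts to an isomorphism $\langle A\rangle\to\langle B\rangle$, so $gag^{-1}=b^\epsilon$ for some $\epsilon\in\{\pm1\}$, and then $ga_ig^{-1}=b^{\epsilon k_i/d}$; comparing with $b_i=b^{l_i/d'}$ forces the numerical condition $\epsilon d'k_i=dl_i$ for every $i$. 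Conversely, under this condition any conjugator $g_0$ of $a$ to $b^\epsilon$ already satisfies $g_0a_ig_0^{-1}=b_i$ for all $i$, since two such conjugators differ by an element of $C_G(a)$, which centralizes all of $\langle a\rangle$ and hence each $a_i$. The algorithm therefore checks the numerical condition for each $\epsilon\in\{\pm1\}$, and when it holds calls Theorem~\ref{conjgbs} to decide whether $a$ is conjugate to $b^\epsilon$ and, if so, to return a conjugator; if neither $\epsilon$ yields a conjugacy, $A$ and $B$ are not conjugate.

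The main obstacle I anticipate is effectivity rather than decidability: Theorem~\ref{conjgbs} is stated as a decision procedure, whereas the corollary demands an explicit conjugator when one exists. This is not a serious difficulty, but it must be argued: the proof of Theorem~\ref{conjgbs} encodes conjugacy as the reachability of a specific configuration in a Petri net, and the algorithm of \cite{Reut} is constructive in the sense that a witness firing sequence can be extracted, which in turn yields the conjugating sequence $(m_0,\dots,m_s)$ together with the edges $e_{j_i}$ of the proof of Theorem~\ref{conjgbs}, and hence the explicit conjugator $g=t_{j_s}\cdots t_{j_1}$.
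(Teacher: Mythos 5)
Your proof is correct and follows essentially the same route as the paper's: dispatch the non-elliptic case to Theorem~\ref{thintro}, and otherwise replace $A$ and $B$ by generators $a$, $b$ of the cyclic elliptic subgroups they generate, match exponents, and invoke Theorem~\ref{conjgbs}. You are in fact slightly more careful than the paper, whose ``gcd'' formulation glosses over the sign ambiguity that your $\epsilon\in\{\pm1\}$ handles explicitly.
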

 
 \begin{proof}
 Let $G$ be a \GBS group.
 Take $A=(\AA_0,\dots,\AA_k)$ and $B=(\BB_0,\dots,\BB_k)$ to $k+1$-tuples of $G$. If $\langle\AA_0,\dots,\AA_k \rangle$ is not an elliptic subgroup, then the problem is already solved by theorem \ref{multiconj}. Otherwise, all $a_i$ belongs to a same vertex group $\langle a \rangle$, and obviously, we can to the same for $B$. Call $b$ a generator of the vertex group containing  $\langle\BB_0,\dots,\BB_k \rangle$. Call $a'$ the gcd of the $a_i$'s in  $\langle a \rangle$ and $b'$ the gcd of the $b_i$'s in  $\langle b \rangle$. For every $i$ there exists $m_i$ and $n_i$ such that $a_i=a'^{m_i}$ and $b_i=b'^{n_i}$. Then $A$ and $B$ are conjugate if and only if $a'$ and $b'$ are conjugate and $m_i=n_i$ for every $i$.
Finally theorem \ref{conjgbs} allows us to conclude.
 \end{proof}
 
\bibliographystyle{plain}
\bibliography{multipleconjugacy}

\vspace{0.3cm}
\begin{flushleft}
\noindent \textsc{Benjamin Beeker}\\
LMNO,\\
Universit\'e de Caen BP 5186\\
F 14032 Caen Cedex \\
France\\
{\tt benjamin.beeker@math.unicaen.fr}
\end{flushleft}
\end{document}